\newtheorem{theorem}{Theorem}[section]
\newtheorem{lemma}[theorem]{Lemma}
\newtheorem{corollary}[theorem]{Corollary}
\newtheorem{proposition}[theorem]{Proposition}
\theoremstyle{definition}
\newtheorem{problem}[theorem]{Problem}
\newtheorem{remark}[theorem]{Remark}
\newcommand{\beq}{\begin{equation}}
\newcommand{\eeq}{\end{equation}}
\newcommand{\dr}{\mathrm{d}}
\newcommand{\ddop}[1]{\frac{\dr}{\dr #1}}
\newcommand{\abs}[1]{\left\vert#1\right\vert}
\newcommand{\R}{\mathbb{R}}
\newcommand{\N}{\mathbb{N}}
\newcommand{\wto}{\rightharpoonup}
\newcommand{\Mr}{\mathcal{M}_r}
\newcommand{\Lc}{\mathcal{L}}
\renewcommand{\Mc}{\mathcal{M}}
\newcommand{\Vc}{\mathcal{V}}
\newcommand{\mystar}{*}
\providecommand{\abs}[1]{\lvert#1\rvert}
\providecommand{\norm}[1]{\lVert#1\rVert}
\DeclareMathOperator{\rank}{rank}
\DeclareMathOperator{\dist}{dist}
\DeclareMathOperator{\vct}{vec}
\DeclareMathOperator{\spn}{span}
\DeclareMathOperator*{\argmin}{arg\,min}
\numberwithin{equation}{section}
\title{Existence of dynamical low-rank approximations to parabolic problems}
\author[M.~Bachmayr, H.~Eisenmann, E.~Kieri and A.~Uschmajew]{Markus Bachmayr \and Henrik Eisenmann \and Emil Kieri \and Andr{\'e} Uschmajew}
\date{}
\address{Institut f\"ur Mathematik, Johannes Gutenberg-Universit\"at Mainz, 55128 Mainz, Germany}
\address{Max Planck Institute for Mathematics in the Sciences, 04103 Leipzig, Germany}
\address{Institute for Numerical Simulation, University of Bonn, 53115 Bonn, Germany}
\address{Max Planck Institute for Mathematics in the Sciences, 04103 Leipzig, Germany}
\thanks{M.B., E.K., and A.U.{} acknowledge support by the Hausdorff Center of Mathematics, University of Bonn. M.B.\ was supported by the Deutsche Forschungsgemeinschaft (DFG, German Research Foundation) - Projektnummer 211504053 - SFB 1060.}
\subjclass[2010]{Primary 35K15, 35R01; Secondary 15A69, 65L05}
\begin{document}

\begin{abstract}
The existence and uniqueness of weak solutions to dynamical low-rank evolution problems for parabolic partial differential equations in two spatial dimensions is shown, covering also non-diagonal diffusion in the elliptic part. The proof is based on a variational time-stepping scheme on the low-rank manifold. Moreover, this scheme is shown to be closely related to practical methods for computing such low-rank evolutions.
\end{abstract}

\maketitle

\section{Introduction}
\label{sec:intro}

Finding hidden structure in the solutions of partial differential equations has always been a key goal in the study of such equations, whether it is for the sake of modeling or for efficient numerical approximation. In fact, exploiting  structures such as low-dimensional parametrizations can be crucial for the numerical treatment of equations on high-dimensional domains to avoid the curse of dimensionality.

It has been observed that under certain conditions on the domain and the data, the solutions of elliptic and parabolic partial differential equations with a dominating ``Laplacian part'' exhibit low-rank approximability, that is, they can be approximated in certain low-rank tensor formats~\cite{Grasedyck04,Schneider14,Dahmen16,Bachmayr17}. If this is the case, then instead of working on full discretization grids, one can impose the low-rank constraint in the design of the solution method in order to take advantage of low-parametric representations. This usually results in a nonlinear approximation algorithm.

A typical approach is to discretize the partial differential equation on  possibly huge, but finite grids, and then use numerical linear algebra techniques for solving the resulting linear systems in low-rank formats; see, e.g,~\cite{Bachmayr16,Hackbusch19} for an overview and further references. How the obtained solutions behave with refinement of discretization depends strongly on the details of the considered methods. This interaction of low-rank approximations and discretizations has been considered for methods that adjust solution ranks adaptively in each step \cite{Bachmayr15,Bachmayr16b}. For methods based on a fixed low-rank constraint, this question is  more difficult due to the nonlinearity of the resulting constrained problems and has found only limited attention in the literature. Since methods operating on fixed-rank manifolds are important algorithmic building blocks, understanding their robustness under discretization refinement is of high practical interest. A first important requirement is to study the well-posedness of the underlying low-rank problems on infinite-dimensional function spaces. While it is not so difficult to make an appropriate variational formulation for elliptic problems subject to low-rank constraints that ensure existence of solutions ~\cite[Sec.~4]{Bachmayr16}, the parabolic case poses substantial difficulties. In this paper we propose such a formulation for parabolic evolution equations on low-rank manifolds in Hilbert space and prove existence of solutions via a time-stepping scheme.

Dynamical low-rank approximation is a general technique for approximating time-dependent problems under low-rank constraints by projecting the vector field onto the tangent space of the low-rank manifold.
For general initial value problems $\dot Y = F(t, Y)$, $Y(0)=Y_0$ for matrices $Y(t)$, the dynamical low-rank approximation on the manifold $\mathcal{M}_r$ of rank-$r$ matrices as considered in~\cite{Koch07} is given by 
\begin{equation}\label{eq:dynamicallowrank}
   \dot Y(t) = P_{Y(t)} F\bigl(t,Y(t)\bigr),
\end{equation}
where $P_{Y(t)}$ is the orthogonal projector onto the tangent space $T_{Y(t)}\mathcal{M}_r$. 
Note that \eqref{eq:dynamicallowrank} is equivalent to the variational problem
\[
\langle \dot Y(t) - F(t,Y(t)), X\rangle = 0 \quad \text{for all $X \in T_{Y(t)}\mathcal{M}_r$.}
\]
This approach is also known as the Dirac-Frenkel variational principle~\cite{Dirac30,Lubich08}. It has been adapted to several different classes of evolution problems in scientific computing, see, e.g., \cite{Sapsis09,Haegeman11,Musharbash15,Musharbash18,Mena18,Einkemmer18} as well as \cite{Uschmajew20} for an overview, and the monograph \cite{Lubich08} on applications in quantum dynamics. In~\cite{Falco19} the Dirac-Frenkel principle has been rigorously extended to an infinite-dimensional framework of low-rank manifolds in tensor product Banach spaces, with focus on evolution equations in strong formulation.

In this work, we develop a weak formulation of the Dirac-Frenkel principle for low-rank approximation of parabolic problems and prove the existence and uniqueness of solutions in a function space setting.
A model problem representative of the more general class covered by our results is the two-dimensional parabolic equation on the product domain $\Omega = (0,1)^2$,
\beq
\label{eq:parabolic0}
  \begin{aligned}
    u_t(x,t) - \nabla \cdot \alpha(t) \nabla u(x,t) &= f(x,t) \quad & &\text{for $(x,t) \in \Omega \times(0,T)$,} \\
    u(x,t) &= 0 \quad & &\text{for $(x,t)\in \partial \Omega \times (0,T)$,}\\
    u(x,0) &= u_0(x) \quad &&\text{for $x \in \Omega$},
  \end{aligned}
\eeq
for which we consider low-rank approximations separating the two spatial coordinates. Here we assume that the matrix $\alpha(t) = (\alpha_{ij}(t))_{i,j=1,2}$ is symmetric for every $t$, uniformly bounded, and uniformly positive definite. The problem~\eqref{eq:parabolic0} is typically formulated in weak form as follows: given $f \in L_2(0,T;H^{-1}(\Omega))$ and $u_0 \in L_2(\Omega)$, find
\[
 u \in W^1_2(0,T;H^1_0(\Omega),L_2(\Omega)) = \{ u \in L_2(0,T; H^1_0(\Omega)) \colon \, u' \in L_2(0,T; H^{-1}(\Omega)) \}
\]
such that for almost all $t \in (0,T)$,
\beq
  \label{eq:parabolic}
  \begin{aligned}
    \langle u'(t), v\rangle + a(u(t),v;t) &= \langle f(t), v \rangle \quad \text{for all $v \in H^1_0(\Omega)$}, \\
    u(0) &= u_0.
  \end{aligned}
  \eeq
Here, by $\langle\cdot,\cdot\rangle : H^{-1}(\Omega) \times H^1_0(\Omega) \to \R$ we denote the dual pairing of $H^1_0(\Omega)$ and $H^{-1}(\Omega)$, and the symmetric, bounded and coercive bilinear form $a : H_0^1(\Omega) \times H_0^1(\Omega) \times [0,T] \to \R$ is defined as
\begin{multline}\label{eq:definition of a}
 a(u,v;t) = \alpha_{11}(t) \int_\Omega \partial_1 u(x,t)\, \partial_1 v(x,t)\,\mathrm{d}x + \alpha_{22}(t) \int_\Omega \partial_2 u(x,t)\, \partial_2 v(x,t)\,\mathrm{d}x 
 \\
   + \alpha_{12}(t) \int_\Omega \partial_1 u(x,t)\, \partial_2 v(x,t)\,\mathrm{d}x + \alpha_{21}(t) \int_\Omega \partial_2 u(x,t)\, \partial_1 v(x,t)\,\mathrm{d}x.
\end{multline}
By classical theory, the problem~\eqref{eq:parabolic} admits a unique solution; see, e.g.,~\cite[Thm.~23.A]{Zeidler90}.

Since $\Omega = (0,1)^2$, we have $L_2(\Omega) = L_2(0,1) \otimes L_2(0,1)$ in the sense of tensor products of Hilbert spaces, and $H_0^1(\Omega) = H_0^1(0,1) \otimes L_2(0,1) \cap L_2(0,1) \otimes H_0^1(0,1)$ with norm
\[
  \norm{v}_{H^1_0(\Omega)}^2 = \norm{v}^2_{H^1_0(0,1)\otimes L_2(0,1)} + \norm{v}^2_{L_2(0,1)\otimes H_0^1(0,1)}.
\] 
Every function $u  \in L_2(\Omega)$ can be written as 
\begin{equation}\label{eq: low-rank representation}
u(x) = u(x_1,x_2) = \sum_{k} u^1_k(x_1)\, u^2_k(x_2) \quad \text{a.e.,}
\end{equation}
with $u^1_k, u^2_k \in L_2(0,1)$ for all $k$. By $\rank(u)$ we denote the smallest number of non-zero terms that is needed for such a representation to exist; in general, one may have $\rank(u) = \infty$.

As low-rank representations are convenient for several reasons, one can ask whether the parabolic equation~\eqref{eq:parabolic0} admits approximate solutions of low-rank. In dynamical low-rank approximation one assumes this to be the case, and attempts to directly evolve the solution on the set
\begin{equation}\label{eq: low-rank manifold}
  \Mr = \{ u \in L_2(\Omega) \colon \rank(u) = r \}
\end{equation}
for a certain finite value of $r$. Similarly to finite-dimensional fixed-rank matrices, the set $\Mr$ can be shown to be a differentiable manifold in various ways. For instance, it is an immersed Banach manifold as shown in~\cite{Falco19}. In appendix~\ref{app: local manifold structure} we also describe $\Mr$ as a locally submersed and embedded submanifold of $L_2(\Omega)$. In general, when considering manifolds in infinite-dimensional spaces several subtleties may occur; however, we can largely circumvent these in this work. In principle, our framework presented below only requires a closed linear tangent space $T_u \Mr$ at every $u \in \Mr$, such that $T_u \Mr$ contains derivatives of admissible curves through $u$, and certain continuity properties of the corresponding tangent space projectors with respect to~$u$. For the set $\Mr$, the tangent spaces are described in section~\ref{sec: features of model problem}; see~\eqref{eq: tangent space}.

Given the tangent spaces $T_{u} \Mr$, the dynamics on $\Mr$ are then determined by the following problem: find $u \in W^1_2(0,T;H^1_0(\Omega),L_2(\Omega))$ such that
\[
u(t) \in \Mr \quad \text{for all $t \in [0,T]$},
\]
and such that for almost all $t \in (0,T)$,
\beq
  \label{eq:dlr}
  \begin{aligned}
    \langle u'(t), v\rangle + a(u(t),v;t) &= \langle f(t), v \rangle \quad \text{for all $v \in T_{u(t)} \Mr \cap H^1_0(\Omega)$}, \\
    u(0) &= u_0 \in \Mr \,\cap\, H^1_0(\Omega).
  \end{aligned}
\eeq
Thus, in contrast to~\eqref{eq:parabolic}, in~\eqref{eq:dlr} we seek a curve $t \mapsto u(t)$ on $\Mr$ which for almost every $t \in (0,T)$ satisfies the weak parabolic formulation~\eqref{eq:parabolic} on the tangent space only.

Our goal in this paper is to provide an abstract framework for dealing with problems of the type~\eqref{eq:dlr}, and to prove existence of solutions via a time-stepping scheme. We do not require the diffusion matrix $\alpha$ to be diagonal, which means that we allow general anisotropic diffusion. If $\alpha$ is diagonal, that is, $\alpha_{12} = \alpha_{21} = 0$, the problem is substantially easier; in particular, in this case the exact solution of the homogeneous equation with $f = 0$ and $u_0 \in \Mc_r$ satisfies $u(t) = (\exp(t \, \alpha_{11}\, \partial_1^2) \otimes \exp(t \,\alpha_{22} \,\partial_2^2)) u_0 \in \Mc_r$ for all $t$. 
In the case of non-diagonal $\alpha$, the unbounded operator on $L_2(\Omega)$ induced by the bilinear form $a$ no longer maps to the tangent space of the manifold. As a consequence, techniques based on splitting the operator into an unbounded part mapping to the tangent space and an arbitrary bounded part, as previously used, for instance, in \cite{Koch07,Koch07b}, are not applicable in the present setting.

Our existence proof is based on a Rothe-type temporal semidiscretization using mini\-mization problems on $\Mc_r$ (or, more precisely, on its weak closure) in each time step. Off-diagonal parts in the diffusion are treated via bounds on mixed derivatives that are always available for elements in the intersection $\Mc_r \cap H_0^1(\Omega)$, which is a remarkable aspect of the interplay between low-rank structures and regularity in function spaces.
We require slightly more regularity of $u_0$ and $f$ than necessary for standard parabolic problems in linear spaces like~\eqref{eq:parabolic}, but still less than needed for strong solutions. Specifically, applied to the model problem~\eqref{eq:parabolic}, our abstract results give solutions to the dynamical low-rank formulation~\eqref{eq:dlr} under the assumptions $u_0 \in \Mr \cap H_0^1(\Omega)$ and $f \in L_2(0,T; L_2(\Omega))$, as long as the smallest singular values in the low-rank representation of $u(t)$ do not approach zero. 
Compared to previous works, we do not make use of components in low-rank representations, but treat the problem directly on the manifold $\Mr$. This allows for generalization to evolutions on more general manifolds.
We also obtain uniqueness of solutions in the same abstract setting under a mild additional integrability assumption, which 
in particular is satisfied for the model problem~\eqref{eq:dlr}.

Beyond the comparably well-developed analysis of dynamical low-rank approximations in finite-dimensional spaces \cite{Koch07,Arnold14,Kieri16,Feppon18,Ostermann19}, the available results for low-rank evolution problems in function spaces cover mainly Schr\"odinger-type equations \cite{Lubich08}, in particular the closely related higher-dimensional generalization of the multi-configuration time-dependent Hartree method (MCTDH) considered in \cite{Meyer90,Koch07b,Bardos09,Bardos10,Koch11,Falco19}. An important ingredient in many results is the decomposition of the operators into a Laplacian part, which maps points on the low-rank manifold to its tangent space, and a potential term satisfying suitable boundedness properties.
A very similar decomposition with differential operators mapping to the tangent space is also assumed in the recent work \cite{Kazashi20} on parameter-dependent parabolic problems, where the separation of variables is done not between spatial variables as considered here, but rather between the spatial and the parametric variables. An error analysis for such an approach was presented in \cite{Musharbash15}. We also mention a different approach to low-rank approximations of parabolic problems proposed in \cite{Boiveau19}, where separation of spatial and time variables is combined with a space-time variational formulation.

The paper is organized as follows: in section~\ref{sec:abstract-formulation} we give an abstract formulation of the problem for general evolution equations on manifolds under assumptions that reflect the main features of the model problem~\eqref{eq:dlr}. In section~\ref{sec: temporal discretization}, we introduce the time-stepping scheme that is used to approximate solutions. Then we show in section~\ref{sec: existence} that this scheme yields a solution to the continuous problem in the limit, with uniqueness ensured under a minor additional integrability assumption. Section~\ref{sec:numerical methods} is devoted to questions of numerical approximation. We give an outlook on directions for further work in section~\ref{sec:outlook}.

\section{Abstract formulation}
\label{sec:abstract-formulation}

Before we switch to an abstract model for our existence proof, we highlight some particular properties of the model problem~\eqref{eq:dlr} that will motivate the assumptions made in the abstract setting. We believe that the general formulation presented in section~\ref{subsec: abstract setting} will be useful to study parabolic problems on more general low-rank tensor manifolds in tensor product Hilbert spaces of higher order, for instance $L_2((0,1)^d)$, as well. Low-rank tensor formats with suitable properties may include Tucker tensors \cite{DeLathauwer00}, hierarchical Tucker tensors \cite{Hackbusch09}, and tensor trains \cite{Oseledets11}.

\subsection{Some features of the model problem on $\Omega = (0,1)^2$}\label{sec: features of model problem}

Let us first inspect the rank-$r$ manifold $\Mr \subset L_2(\Omega)$ defined in~\eqref{eq: low-rank manifold} in more detail. We note that $\Mr$ is not a closed subset of $L_2(\Omega)$. In fact, its closure $\overline{\Mc_r}$ is the set $\Mc_{\le r}$ of all $u \in L_2(\Omega)$ with $\rank(u) \le r$ and this closure is even weakly sequentially closed; see, e.g., \cite[Lemma~8.6]{Hackbusch19}. In other words,
\[
\Mc_{\le r} = \Mc_{\le r-1} \cup \Mc_r =  \overline{\Mc_r} = \overline{\Mc_r}^w,
\]
where the superscript $w$ indicates the weak sequential closure.
Another important property of $\Mr$ is that it is a cone, that is, $u \in \Mr$ implies $s u \in \Mr$ for all $s > 0$. 

\subsubsection{Tangent spaces}
For convenience let us use the notation $u^1 \otimes u^2$ for the tensor product of two $L_2(0,1)$ functions, that is, $(u^1 \otimes u^2)(x_1,x_2) = u^1(x_1) \,u^2(x_2)$ a.e. Every $u \in \Mr$ admits infinitely many representations of the form~\eqref{eq: low-rank representation}, among which the \emph{singular value decomposition} (SVD)
\begin{equation}\label{eq: SVD}
 u = \sum_{k} \sigma_k^{} u^1_k \otimes u^2_k
\end{equation}
is of central importance for the geometric description of low-rank manifolds. In~\eqref{eq: SVD}, $(u^1_1)$ and $(u^2_k)$ are both $L_2(0,1)$-orthonormal systems, and $(\sigma_k) = (\sigma_k(u))$ is the non-increasing, positive sequence of \emph{singular values}. The existence of such a decomposition is well known in any tensor product of Hilbert spaces~\cite[Thm.~4.137]{Hackbusch19}. The decomposition is rank-revealing in the sense that if $\rank(u) = r$, then $\sigma_1 \ge \dots \ge \sigma_r > 0$, and $\sigma_k = 0$ for $k > r$.

With the SVD~\eqref{eq: SVD}, the \emph{tangent space} at a point $u \in \Mr$, that is, the space of all tangent vectors $\phi '(0)$ to admissible differentiable curves $\phi(t) \in \Mr$ through $u = \phi(0)$, can be written as
\begin{equation}\label{eq: tangent space}
T_u \Mr = \left\{ v = \sum_{k=1}^r v^1_k \otimes u^2_k + u^1_k \otimes v^2_k\, \colon\: v^1_k, v^2_k \in L_2(0,1) \right\}.
\end{equation}
It can be seen quite directly that the space $T_u \Mr$ defined in this way contains only tangent vectors:  
Given $v$ as in~\eqref{eq: tangent space},
consider the curve
\begin{equation}\label{eq: curves in Mc_r}
 \phi(t) = \sum_{k=1}^r \sigma_k \left(u_k^1 + t \sigma_k^{-1} v_k^1\right) \otimes \left( u_k^2 + t \sigma_k^{-1} v_k^2 \right),
\end{equation}
then $\phi(0) = u$ and $\phi'(0) = v$. By~\eqref{eq: best approximation} below, $\phi(t)$ indeed lies in $\Mr$ for small $\abs{t}$, which proves that $v$ is a tangent vector. For a proof that $T_u \Mr$ indeed contains all tangent vectors, we refer to the fact that $T_u \Mr$ is the null space of a local submersion for $\Mr$ as shown in appendix~\ref{app: local manifold structure}. Without loss of generality, we could add the gauging conditions
\begin{equation}\label{eq: gauge conditions}
 ( v^1_k , u^1_\ell )_{L_2(0,1)} = 0 \quad \text{for all $k,\ell$,}  
 \qquad
 ( v^1_k, v^1_\ell )_{L_2(0,1)} = 0 \quad \text{for $k \neq \ell$}
\end{equation}
to the definition of $T_u \Mc_r$, where $(\cdot,\cdot)_{L_2(0,1)}$ is the inner product in $L_2(0,1)$. With these gauging conditions, the representation of tangent vectors becomes unique (all of the terms $v^1_k \otimes u^2_k$ and $u^1_k \otimes v^2_k$ become mutually orthogonal). It is then not difficult to show that $T_u \Mc_r$ is closed in $L_2(\Omega)$. Moreover, subject to~\eqref{eq: gauge conditions}, one can verify that the map $v \mapsto \sum_{k=1}^r \sigma_k \left(u_k^1 + v_k^1\right) \otimes \left( u_k^2 + v_k^2 \right)$, which is a similar construction as~\eqref{eq: curves in Mc_r}, is an embedding from an open neighborhood of zero in $T_u \Mr$ to an open neighborhood  (in the relative $L_2(\Omega)$ topology) of $u$ in $\Mr$.

We will also use the intersection of $\Mc_r$ with smoothness spaces. As shown below, see~\eqref{eq:bound on factor H1 norm}, if $u \in \Mc_r$ belongs to $H_0^1(\Omega)$, then the factors $u_k^1,u_k^2$ in the SVD~\eqref{eq: SVD} all belong to $H_0^1(0,1)$. Likewise, a similar argument shows that if a corresponding tangent vector $v = \sum_{k=1}^r v^1_k \otimes u^2_k + u^1_k \otimes v^2_k$, obeying the gauging conditions~\eqref{eq: gauge conditions}, belongs to $H_0^1(\Omega)$, then the $v_k^1,v_k^2$ are in $H_0^1(0,1)$ as well. Consequently, in this case the curve~\eqref{eq: curves in Mc_r} yielding the tangent vector $v \in T_u \Mc_r  \cap H_0^1(\Omega)$ satisfies 
\begin{equation}\label{eq: curve in V}
\phi(t) \in \Mc_r \cap H_0^1(\Omega)
\end{equation}
for all $t$. An analogous condition will be assumed in the abstract setting as well.

A famous theorem due to Schmidt~\cite{Schmidt1907} states that truncating the SVD of $u$ yields best approximations of lower rank in the $L_2(\Omega)$-norm. A particular instance of this result is that the smallest nonzero singular value $\sigma_r(u)$ of $u \in \Mr$ equals the $L_2(\Omega)$-distance of $u$ to the relative boundary $\Mc_{\le r-1}$ of $\Mc_r$:
\begin{equation}\label{eq: best approximation}
\sigma_{r}(u) = \dist_{L_2(\Omega)}(u,\Mc_{\le r-1}) = \dist_{L_2(\Omega)}(u, \overline{\Mc_r}^w \setminus \Mc_r).
\end{equation}

The smallest singular value is also related to local curvature bounds for the manifold $\Mr$, specifically to perturbations of tangent spaces. For $u \in \Mr$ we denote by $P_u$ the $L_2$-orthogonal projection on $T_u \Mr$. It is given as 
\begin{equation}\label{eq: tangent space projector}
P_u = P_1 \otimes I + I \otimes P_2 - P_1 \otimes P_2
\end{equation}
where $P_1$ and $P_2$ denote the $L_2(\Omega)$-orthogonal projections onto the spans of $u^1_1,\dots,u^1_r$ and $u^2_1,\dots,u^2_r$, respectively. Then one can show the following: for $u, v \in \Mr$ it holds that
\beq
  \label{eq:curv}
  \|P_u - P_v\|_{L_2(\Omega) \to L_2(\Omega)} \le \frac{2} {\sigma_{r}(u)} \|u-v\|_{L_2(\Omega)} 
\eeq
and
\beq
  \label{eq:curv2}
   \|(I - P_v)(u-v)\|_{L_2(\Omega)} \le \frac{1}{\sigma_{r}(u)}\|u-v\|^2_{L_2(\Omega)}.  
\eeq
This behavior of tangent spaces to low-rank manifolds is well known in finite dimension, even for more general tensor formats~\cite{Arnold14,Lubich13,Wei16a}. In infinite-dimensional Hilbert spaces, a bound like~\eqref{eq:curv} and~\eqref{eq:curv2} was obtained, for instance, for the (more general) Tucker format in~\cite{Conte10}. For convenience, we provide a self-contained proof for~\eqref{eq:curv} and~\eqref{eq:curv2} in the appendix 
(Lemma~\ref{thm:curvature} and Corollary~\ref{cor: curvature in H norm}).

With regard to the estimates~\eqref{eq:curv} and~\eqref{eq:curv2}, we further note that on every weakly compact subset $\Mc_r'$ of $\Mc_r$ the infimum
\[
\sigma_* \coloneqq \inf_{u \in \Mc_r'} \sigma_{r}(u) = \inf_{u \in \Mc_r'} \dist_{L_2(\Omega)}(u,\Mc_{\le r-1}) = \dist_{L_2(\Omega)}(\Mc_r',\overline{\Mc_r}^w\setminus \Mc_r)
\]
is positive and attained by some $u_* \in \Mc_r'$. To see this, note first that for Banach spaces, by the Eberlein-{\v S}mulian theorem, weak compactness is equivalent to weak sequential compactness.
Consider sequences $(u_n) \subset \Mc_r'$ and $(v_n) \subset \Mc_{\le r-1}$ such that
\[
\| u_n - v_n \|_{L_2(\Omega)} \le \sigma_* + 1/n.
\]
Both sequences are bounded, and hence $(u_n,v_n)$ admits a weakly converging subsequence with limit $(u_*,v_*)$. Then $u_* \in \Mc_r'$ and $v_* \in \Mc_{\le r-1}$ since both sets are weakly sequentially closed. Since the norm is weakly sequentially lower semicontinuous, we obtain
\(
\sigma_* \le \| u_* - v_* \|_{L_2(\Omega)} \le \sigma_*,
\)
and thus equality. This shows
\begin{equation}\label{eq: sigma*>0}
\sigma_* = \dist_{L_2(\Omega)}(u_*,\Mc_{\le r-1}) > 0.
\end{equation}

\subsubsection{Elliptic operators and low-rank manifolds}

Let us now discuss the interplay between the elliptic operator and the manifold in the model problem~\eqref{eq:dlr}. Note that from the formulation~\eqref{eq:dlr}, we will only have information on $a(u(t),\cdot;t)$ on the tangent space $T_{u(t)} \Mr$. In order to obtain a priori estimates, we thus need additional properties of the bilinear form $a$ that enables us to control $a(u(t),\cdot;t)$ also on the complement of $T_{u(t)} \Mr$.

In case of the model problem~\eqref{eq:dlr} we can split the bilinear form into two parts $a = a_1 + a_2$ with
\[
a_1 = a_{11} + a_{22}, \quad a_2 = a_{12} + a_{21}.
\]
These two parts are generated by the differential operators
\begin{equation}\label{eq:splitting of differential operator}
A_1(t) = - \alpha_{11}(t) \,\partial_1^2 - \alpha_{22}(t) \,\partial_2^2,  \quad A_2(t) = - \alpha_{12}(t) \,\partial_1 \partial_2 - \alpha_{21}(t)\, \partial_2 \partial_1,
\end{equation}
corresponding to divergence and mixed derivatives at time $t$, respectively. The operator $A_1(t)$ has the remarkable property that it maps sufficiently smooth functions $u \in \Mr$ to the tangent space $T_u \Mr$. Namely, given the SVD representation~\eqref{eq: SVD}, we get
\begin{equation}\label{eq: A1 to tangent space}
(A_1(t) u)(x_1,x_2) = - \sum_{k=1}^r \sigma_k(u)  \left[ \alpha_{11}(t)\, \partial_1^2 u_k^1(x_1) \, u_k^2(x_2) + u_k^1(x_1) \, \alpha_{22}(t)\, \partial^2_2 u_k^2(x_2) \right],
\end{equation}
which is in $T_u \Mr$ by~\eqref{eq: tangent space} if the second derivatives $\partial_1^2 u_k^1$ and $\partial_2^2 u_k^2$ are in $L_2(0,1)$.

In order to translate this property to the generated bilinear forms $a_1(\cdot,\cdot;t)$, we observe that if $u \in \Mr \cap H^1_0(\Omega)$, then actually $u \in H^1_{\text{mix}}(\Omega) = H^1_0(0,1) \otimes H^1_0(0,1)$. That is, a finite-rank function $u \in H^1_0 (\Omega)$ automatically possesses mixed derivatives of order one, and all singular vectors $u_k^i$ in the SVD~\eqref{eq: SVD} are themselves in $H^1_0(0,1)$. 
To see this, let $u$ have the SVD~\eqref{eq: SVD}, then, by orthogonality
\[
 u_k^1(x_1) = \frac{1}{\sigma_k} \int_0^1 u(x_1,x_2)\, u^2_k(x_2) \ \mathrm{d}x_2,
\]
which gives
\begin{equation}\label{eq:bound on factor H1 norm}
\| \partial_1 u_k^1\|_{L_2(0,1)} \le \frac{1}{\sigma_k(u)} \| u \|_{H^1_0(\Omega)}.
\end{equation}
Likewise, $\| \partial_2 u_k^2\|_{L_2(0,1)}$ admits precisely the same bound. Note that these bounds could be refined, since, e.g., in~\eqref{eq:bound on factor H1 norm} only the derivative of $u$ with respect to $x_1$ is needed.

Based on the regularity of the singular vectors one can show that if $u \in \Mc_r \cap H_0^1(\Omega)$, the tangent space projection $P_u$ given in~\eqref{eq: tangent space projector} can be bounded in $H_0^1$-norm as a map from $H_0^1(\Omega)$ to $T_u \Mc_r \cap H_0^1(\Omega)$ as follows: 
\begin{equation}\label{eq: H1 projection bound}
\| P_u v \|_{H_0^1(\Omega)} \le \left(1 + \frac{r}{\sigma_r(u)^2} \| u \|_{H_0^1(\Omega)}^2 \right)^{1/2} \| v \|_{H_0^1(\Omega)},
\end{equation}
see Proposition~\ref{cor: bound on projection in V norm} in the appendix.

As a consequence, requiring only $u \in \mathcal{M}_r \cap H^1_0(\Omega)$, we can generalize the feature that the operator $A_1(t)$ maps to the tangent space to the following property of the induced bilinear form $a_1$: for every $t$,
\begin{equation}\label{eq:modelA4a}
a_1(u,v;t) = a_1(u,P_u v;t) \quad \text{for all $u \in \Mc_r \cap H_0^1(\Omega)$ and $v \in H_0^1(\Omega)$.}
\end{equation}
For a proof, choose a sequence $(u_n) \subseteq \Mc_r \cap H^2(\Omega) \cap H_0^1(\Omega)$ converging to $u$ in $H_0^1(\Omega)$-norm. Then for $v \in H_0^1(\Omega)$, we have
\[
a_1(u_n,v;t) = \langle A_1(t) u_n, v\rangle = \langle A_1(t) u_n, P_{u_n} v\rangle = a_1(u_n, P_{u_n} v;t)
\]
since $A_1(t) u_n \in T_{u_n} \Mc$ by~\eqref{eq: A1 to tangent space}. 
Moreover, 
\[
   a_1(u_n, P_{u_n} v;t) = a_1(u, P_u v) + a_1(u, (P_{u_n} - P_u) v) + a_1(u_n - u, P_{u_n} v).
\]
We have $P_{u_n}v\to P_{u}v$ strongly in $L_2(\Omega)$ by~\eqref{eq:curv}, and~\eqref{eq: H1 projection bound} yields $\limsup_n \|P_{u_n}v\|_{H_0^1}<\infty$. Since $L_2(\Omega)$ is dense in $H^{-1}(\Omega)$ it follows that $P_{u_n}v\to P_{u}v$ weakly in $H_0^1(\Omega)$ by a standard argument; see,~e.g.,~\cite[Prop.~21.23(g)]{Zeidler90}. Consequently, $a_1(u_n, P_{u_n} v;t) \to a_1(u, P_u v)$.
At the same time, 
\(
a_1(u_n,v;t) \to a_1(u,v;t)
\), so we have verified~\eqref{eq:modelA4a}.

For the operator $A_2(t)$ on the other hand, the preceding considerations show that it actually is well defined on $\Mr \cap H^1_0(\Omega)$ in a strong sense: applying $\partial_1 \partial_2$ to~\eqref{eq: SVD} and using the triangle inequality we get from~\eqref{eq:bound on factor H1 norm} that
\[
\|\partial_1 \partial_2 u \|_{L_2(\Omega)} \le \sum_{k=1}^r \frac{1}{\sigma_k(u)} \| u \|_{H^1_0(\Omega)}^2 \le \frac{r}{\sigma_{r}(u)} \| u \|_{H^1_0(\Omega)}^2.
\]
By~\eqref{eq:definition of a}, this implies that for every $t$, the bilinear form $a_2(\cdot,\cdot;t)$ associated to the operator $A_2(t)$ has the following property: for fixed $u \in \Mr \cap H^1_0(\Omega)$, the linear functional $v \mapsto a_2(u,v;t)$ on $H^1_0(\Omega)$ is actually continuous on $L_2(\Omega)$, with $L_2(\Omega)$ dual norm 
\beq
\label{eq: estimate for A4b}
\| A_2(t) u \|_{L_2(\Omega)} \le \frac{2 r \abs{\alpha_{12}(t)}}{\sigma_{r}(u)} \| u \|_{H^1_0(\Omega)}^2.
\eeq
Note that here, the inverse of the smallest singular value of $u$ enters again.

\subsection{Abstract formulation of the problem}\label{subsec: abstract setting}

Motivated by the model problem \eqref{eq:dlr} and its properties discussed in the previous section, we next introduce a more general set of assumptions under which we establish existence and uniqueness of solutions. Here we do not explicitly assume that we are dealing with a low-rank manifold, but only use some of its features. Besides standard assumptions on the elliptic part of the operator, we need it to have a decomposition with the basic features of the one in \eqref{eq:splitting of differential operator}.

\subsubsection{Standard assumptions on parabolic evolution equations}
We consider a Gelfand triplet
\[
V \subseteq H \subseteq V^*
\]
where the real Hilbert space $V$ is compactly embedded in the real Hilbert space $H$. Since the embedding is compact it is also continuous, that is,
\beq
  \label{eq:cont-embed}
  \|u\|_H^2 \lesssim \|u\|_V^2 \quad \text{for all } u \in V.
\eeq
In the case $H = L_2(\Omega)$ and $V = H^1_0(\Omega)$, \eqref{eq:cont-embed} is the Poincar{\'e} inequality.

By $\langle\cdot,\cdot\rangle : V^* \times V \to \R$ we denote the dual pairing of $V^*$ and $V$, and by $(\cdot,\cdot)$ we denote the inner product on $H$. For every $t \in [0,T]$, let $a(\cdot,\cdot;t): V \times V \to \R$ be a bilinear form which is assumed to be symmetric,
\[
  a(u,v;t) = a(v,u;t) \quad \text{for all } u, v \in V \text{ and } t \in [0,T],
\]
uniformly bounded,
\[
  |a(u,v;t)| \le \beta\|u\|_V \|v\|_V \quad \text{for all } u, v \in V \text{ and } t \in [0,T]
\]
for some $\beta > 0$, and uniformly coercive,
\[
  a(u,u;t) \ge \mu \|u\|_V^2 \quad \text{for all } u \in V \text{ and } t \in [0,T]
\]
for some $\mu > 0$. Under these assumptions, $a(\cdot,\cdot;t)$ is an inner product on $V$ defining an equivalent norm. Furthermore, it defines a bounded operator
\begin{equation}\label{eq: associated unbounded operator}
 A(t) : V \to V^*
\end{equation}
such that
\[
a(u,v;t) = \langle A(t)u,v \rangle \quad \text{for all $u,v \in V$.}
\]

We also assume that $a(u,v;t)$ is Lipschitz continuous with respect to $t$. In other words, there exists an $L \ge 0$ such that
\beq
  \label{eq:a-lipschitz}
  |a(u,v; t) - a(u,v; s)| \le L\beta\|u\|_V\|v\|_V |t - s|
\eeq
for all $u, v \in V$ and $s, t \in [0, T]$, which in the model problem corresponds to the Lipschitz continuity of the function $t \mapsto \alpha(t)$. 

\subsubsection{Manifolds and tangent spaces}

Our aim is to deal with evolution equations on a (sub)manifold
\[
 \Mc \subseteq H.
\]
For present purposes, we do not have to be very strict regarding the notion of a manifold. 
What we essentially need is a tangent bundle: we assume that for every $u \in \Mc$ there exists a closed subspace $T_u\Mc \subset H$ given via a bounded $H$-orthogonal projection
\[
 P_u : H \to T_u \Mc,
\]
such that $T_u$ contains all tangent vectors to $\Mc$ at $u$. Here a tangent vector is any $v \in H$ for which there exists a (strongly) differentiable curve $\phi : (-\epsilon, \epsilon) \to H$ (for some $\epsilon > 0$) such that $\phi(t) \in \Mc$ for all $t$ and
\[
 \phi(0) = u, \quad \phi'(0) = v.
\]
For our main existence result, we eventually assume that the map $u \mapsto P_u$ is locally Lipschitz continuous on $\Mc$ as a mapping on $H$. 

It will be tacitly assumed that
\begin{itemize}
 \item[--] $\Mc \cap V$ is not empty,
 \item[--] for every $u \in \Mc \cap V$, the space $T_u \Mc \cap V$ is not empty.
\end{itemize}
Indeed, in the main assumptions below we also require that $\Mc$ is a cone, as is the case for low-rank manifolds. Then the first property implies the second, because in this case $u \in T_u \Mc$ for every $u \in \Mc$. 

\subsubsection{Problem formulation and main assumptions}\label{sec:assumptions}

The abstract problem we are considering is now the following.

\begin{problem}\label{problem 1}
Given $f \in L_2(0,T; H)$ and $u_0 \in \Mc \cap V$, find
\[
 u \in W^1_2(0,T;V,H) = \{ u \in L_2(0,T; V) \colon \; u' \in L_2(0,T; H) \}
\]
such that for almost all $t \in [0,T]$,
\beq
  \label{eq:prob1}
  \begin{aligned}
  u(t)  &\in \Mc,\\
    \langle u'(t), v\rangle + a(u(t),v;t) &= \langle f(t), v \rangle \quad \text{for all $v \in T_{u(t)} \Mc \cap V$}, \\
    u(0) &= u_0.
  \end{aligned}
\eeq
\end{problem}

We emphasize again that the main challenge of this weak formulation is that according to the Dirac-Frenkel principle, the test functions are from the tangent space only.
For showing that Problem~\ref{problem 1} admits solutions we will require several assumptions. These assumptions are abstractions of corresponding properties of the model problem of a low rank manifold as discussed in section~\ref{sec: features of model problem}, and hence the main results of this paper apply to this setting. The assumptions are the following.

\medskip

\begin{itemize}[leftmargin=2em,itemsep=.5em]
\item[\textbf{A1}] (Cone property) $\Mc$ is a cone, that is, $u \in \Mc$ implies $s u \in \Mc$ for all $s > 0$. 
\item[\textbf{A2}] (Curvature bound) 
For every subset $\Mc'$ of $\Mc$ that is weakly compact in $H$, there exist constants $\kappa = \kappa(\Mc')$ and $\epsilon = \epsilon(\Mc')$ such that
  \[
    \|P_u - P_v \|_{H \to H} \le \kappa \|u - v\|_H 
  \]
and
\[
\|(I-P_u)(u-v)\|_H\leq\kappa\|u-v\|_H^2 
\]
for all $u,v \in \Mc'$ with $\|u-v\|_H\leq\epsilon$.
\pagebreak
\item[\textbf{A3}] (Compatibility of tangent space)
\smallskip
\begin{itemize}
\item[\upshape (a)]
 For $u \in \Mc \cap V$ and $v \in T_u \Mc \cap V$ an admissible curve with $\phi(0) = u$, $\phi'(0) = v$ can be chosen such that
\[                                                                                                                                                                                      
\phi(t) \in \Mc \cap V
\]
for all $\abs{t}$ small enough.
\item[(b)]
  If $u \in \Mc \cap V$ and $v \in V$ then $P_u v \in T_u \Mc \cap V$.
\end{itemize}
\item[\textbf{A4}] (Operator splitting) The associated operator $A(t)$ in~\eqref{eq: associated unbounded operator} admits a splitting
\[
 A(t) = A_1(t) + A_2(t)
\]
such that for all $t \in [0,T]$, all $u \in \Mc \cap V$ and all $v \in V$, the following holds:
\smallskip
\begin{itemize}
\item[\upshape (a)] ``$A_1(t)$ maps to the tangent space'':
\[
\langle A_1(t)u, v \rangle = \langle A_1(t) u, P_u v \rangle.
\]
 \item[\upshape (b)] ``$A_2(t)$ is locally bounded from $\Mc \cap V$ to $H$'': For every subset $\Mc'$ of $\Mc$ that is weakly compact in $H$, there exists $\gamma = \gamma(\Mc') > 0$ such that
\[ 
A_2(t)u \in H \quad \text{and} \quad \| A_2(t) u \|_H \le \gamma \| u \|_V^\eta \quad \text{for all $u \in \Mc'$}
\]
with an $\eta>0$ independent of $\Mc'$.
\end{itemize}
\end{itemize}

\medskip

Recall that for the model problem~\eqref{eq:dlr} on $\Mr$, \textbf{A2} is stated in~\eqref{eq:curv} and~\eqref{eq:curv2}, taking~\eqref{eq: sigma*>0} into account; see Lemma~\ref{thm:curvature} and Corollary~\ref{cor: curvature in H norm} in the appendix for details. Property \textbf{A3}(a) has been discussed in~\eqref{eq: curve in V}, and \textbf{A3}(b) in~\eqref{eq: H1 projection bound}. 
 With the splitting of $A$ according to~\eqref{eq:splitting of differential operator}, in~\eqref{eq:modelA4a} we have shown that \textbf{A4}(a) holds, and \textbf{A4}(b) follows (with $\eta = 2$ independent of $\Mc'$) from~\eqref{eq: estimate for A4b}, again using~\eqref{eq: sigma*>0} and the boundedness of $\alpha$.

\begin{remark}It is well known that every function $u \in W^1_2(0,T;V,H)$ has a continuous representative $u\in C(0,T;H)$. Yet the notion of solution as defined in Problem~\ref{problem 1} in principle does not require that $u(t) \in \Mc$ for all $t \in [0,T)$. Nonetheless, the existence and uniqueness statements for a maximal time interval (Theorems~\ref{thm: second main theorem} and~\ref{thm: unique}) will be derived, as expected, by extending continuous local solutions until they may hit the boundary of $\Mc$, such that $u(t) \in \Mc$ will in fact be ensured for all $t$ up to this point.
\end{remark}
\begin{remark} 
In Problem~\ref{problem 1} we can actually weaken the uniform coercivity assumption to a uniform G\aa rding inequality 
\begin{align*}
\langle A(t) u,u\rangle\leq \mu \|u\|_V^2-\alpha \|u\|_H^2.
\end{align*}
To see this, suppose $v$ is a solution (in the sense of Problem~\ref{problem 1}) of
\begin{align*}
\langle v'(t)+(A(t)+\alpha I) v(t),w\rangle &=\langle e^{-\alpha t}f(t),w\rangle  \quad \text{for all $w \in T_{v(t)} \Mc \cap V$}, \\
    v(0) &= u_0,
\end{align*}
which, given the G\aa rding inequality, has a uniformly coercive operator. Then $u(t) = e^{\alpha t}v(t)$ solves the equation
\begin{align*}
\langle u'(t)+A(t)u(t),w\rangle &=\langle f(t),w\rangle  \quad \text{for all $w \in T_{v(t)} \Mc \cap V$}, \\
    u(0) &= u_0.
\end{align*}
But since $\Mc$ is a cone, we have $T_{u(t)} \Mc \cap V= T_{v(t)} \Mc \cap V$, that is, $u$ is indeed a solution of Problem~\ref{problem 1} for the initial operator $A(t)$. 
For convenience we can therefore restrict ourselves to the coercive case.
\end{remark}

\begin{remark}
 Our assumptions of course apply to generalizations of \eqref{eq:dlr} derived from \eqref{eq:parabolic0} on higher-dimensional spatial domains of the form $\Omega = \Omega_1 \times \Omega_2$, where $\Omega_i$ for $i=1,2$ is a domain in $\R^{n_i}$ with $n_i \in \N$. In this case, the components in the resulting low-rank representation correspond to groups of $n_1$ and $n_2$ spatial variables, respectively. Note, however, that Assumption \textbf{A4} in general does not apply to parameter-dependent problems where spatial and parametric variables are separated as considered, e.g., in~\cite{Kazashi20} (and in turn, the assumptions made in~\cite{Kazashi20} do not apply in the setting considered here).
\end{remark}

\section{Temporal discretization}\label{sec: temporal discretization}

Given the main assumptions \textbf{A1}--\textbf{A4} stated above, we prove existence of solutions for Problem~\ref{problem 1} by discretizing in time and studying a sequence of approximate solutions with time steps $h \to 0$.
A backward Euler method on $\Mc$ for \eqref{eq:prob1} takes the following form: given $u_i \in \Mc \cap V$ at time step $t_i$, find $u_{i+1} \in \Mc\cap V$ at time step $t_{i+1} > t_i$ such that
\beq
  \label{eq:euler-galerkin}
  \left(\frac{u_{i+1}-u_i}{t_{i+1} - t_i},v\right) + a(u_{i+1},v;t_{i+1}) = \langle f_{i+1},v\rangle \quad\text{for all } v \in T_{u_{i+1}}\Mc\cap V.
\eeq
Here $f_{i+1}$ is the mean value of $f$ on the interval $[t_i,t_{i+1}]$, that is,
\begin{equation}\label{eq:fidef}
f_{i+1} = \frac{1}{t_{i+1}-t_i} \int_{t_i}^{t_{i+1}} f(t) \ \mathrm{d}t.
\end{equation}

As the test space depends on the solution, this equation appears quite difficult to solve. However, when $a(\cdot,\cdot; t_{i+1})$ is symmetric, \eqref{eq:euler-galerkin} is the first order optimality condition of the optimization problem
\beq
  \label{eq:euler-opt}
  u_{i+1} = \argmin_{u\in\overline{\Mc}^w \cap V} F(u) = \frac{1}{2(t_{i+1} - t_i)}\|u-u_i\|_H^2 + \frac{1}{2}a(u,u; t_{i+1}) - \langle f_{i+1}, u\rangle.
\eeq
This is stated in the following lemma.
\begin{lemma}
\label{mincondition}
  Let $u_i\in\Mc\cap V$ and let $u_{i+1}$ be a local minimum of $F$ as in~\eqref{eq:euler-opt} on $\overline{\Mc}^w \cap V$. If $u_{i+1}\in\Mc\cap V$, then \eqref{eq:euler-galerkin} holds.
\end{lemma}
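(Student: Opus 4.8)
The plan is to recognize \eqref{eq:euler-galerkin} as the statement that the derivative of the quadratic functional $F$ vanishes along the tangent space $T_{u_{i+1}}\Mc \cap V$, and to derive this vanishing from local minimality by testing $F$ along an admissible curve. Writing $\tau = t_{i+1}-t_i$ and abbreviating $a(\cdot,\cdot) = a(\cdot,\cdot;t_{i+1})$, the functional in \eqref{eq:euler-opt} is the quadratic $F(u) = \tfrac12 B(u,u) - \ell(u) + \text{const}$, where $B(u,w) = \tfrac1\tau (u,w) + a(u,w)$ is a symmetric, $V$-bounded, $V$-coercive form and $\ell(w) = \tfrac1\tau(u_i,w) + (f_{i+1},w)$ is bounded on $V$ (note $\langle f_{i+1},w\rangle = (f_{i+1},w)$ since $f_{i+1}\in H$). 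Its Gâteaux derivative at $u_{i+1}$ in a direction $w \in V$ is
\[
DF(u_{i+1})[w] = \tfrac1\tau (u_{i+1} - u_i, w) + a(u_{i+1}, w) - (f_{i+1}, w),
\]
so that \eqref{eq:euler-galerkin} is exactly the assertion $DF(u_{i+1})[v] = 0$ for all $v \in T_{u_{i+1}}\Mc \cap V$.

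To prove this, fix such a $v$. Since $u_{i+1} \in \Mc \cap V$ by hypothesis, assumption \textbf{A3}(a) furnishes an admissible curve $\phi$ with $\phi(0) = u_{i+1}$, $\phi'(0) = v$, and $\phi(t) \in \Mc \cap V \subseteq \overline{\Mc}^w \cap V$ for all sufficiently small $\abs{t}$; in the model problem this is the explicit curve \eqref{eq: curves in Mc_r}, which by \eqref{eq: curve in V} remains in $\Mc_r \cap H_0^1(\Omega)$. Because $u_{i+1}$ is a local minimum of $F$ over $\overline{\Mc}^w \cap V$ and $\phi$ takes values in this feasible set, the scalar function $g(t) = F(\phi(t))$ attains a local minimum at the interior point $t=0$. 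It therefore suffices to show that $g$ is differentiable at $0$ with $g'(0) = DF(u_{i+1})[v]$; minimality then forces $g'(0) = 0$, which is the claim.

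The remaining, genuinely technical step is this differentiation. Using the exact quadratic identity $g(t) - g(0) = DF(u_{i+1})[\phi(t) - u_{i+1}] + \tfrac12 B(\phi(t) - u_{i+1}, \phi(t) - u_{i+1})$, the contributions from the $H$-inner product and from $\ell$ are controlled at once by the strong $H$-differentiability of $\phi$ together with $u_i, f_{i+1} \in H$. The main obstacle is the contribution of the bilinear form $a$: to conclude that $\tfrac1t\,a(u_{i+1}, \phi(t) - u_{i+1}) \to a(u_{i+1}, v)$ and that $a(\phi(t)-u_{i+1}, \phi(t)-u_{i+1}) = o(t)$, one needs the difference quotient $(\phi(t)-u_{i+1})/t$ to converge to $v$ in $V$ — or, since $a(u_{i+1},\cdot)$ is $V$-continuous and weak convergence forces boundedness of the quotient in $V$, at least weakly in $V$ — rather than merely in $H$. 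This is precisely where the regularity encoded in \textbf{A3}(a) enters: the admissible curve must be chosen differentiable as a $V$-valued map, which is automatic for the polynomial curve \eqref{eq: curves in Mc_r} whose factors lie in $H_0^1(0,1)$. Granting this, the chain rule yields $g'(0) = DF(u_{i+1})[v]$, and the proof concludes. Since $-v \in T_{u_{i+1}}\Mc \cap V$ as well, one may alternatively derive the two one-sided inequalities $DF(u_{i+1})[\pm v] \ge 0$ from local minimality along the curves realizing $\pm v$ and combine them, dispensing with any two-sided derivative.
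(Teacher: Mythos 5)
Your proof follows the paper's argument exactly: invoke \textbf{A3}(a) to produce an admissible curve $\phi$ in $\Mc \cap V$ through $u_{i+1}$ with velocity $v$, note that local minimality of $u_{i+1}$ over $\overline{\Mc}^w \cap V$ forces $t \mapsto F(\phi(t))$ to have a local minimum at $t = 0$, and conclude that its derivative there, namely $DF(u_{i+1})[v]$, vanishes. Your additional discussion of why the chain rule is legitimate --- the difference quotients $(\phi(t)-u_{i+1})/t$ must be bounded, hence weakly convergent, in $V$ (not merely convergent in $H$) in order to differentiate the $a$-term, since $a$ is unbounded on $H$ --- is a correct refinement of a step that the paper's one-line proof glosses over (the definition of admissible curve and the literal wording of \textbf{A3}(a) only provide strong $H$-differentiability together with $\phi(t) \in \Mc \cap V$), and you rightly observe that this regularity is automatic for the explicit polynomial curve \eqref{eq: curves in Mc_r} in the model problem.
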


\begin{proof}
  Let $v \in T_{u_{i+1}} \Mc \cap V$. By main assumption \textbf{A3}(a) we can find a differentiable curve $\phi(t)$ defined for $\abs{t}$ small enough such that $\phi(0) = u_{i+1}$, $\phi'(0) = v$ and $\phi(t) \in \Mc \cap V$. Then $t \mapsto F(\phi_v(t))$ has a local minimum at $t=0$ and so the derivative is zero there, which yields~\eqref{eq:euler-galerkin}.
\end{proof}

Next, we consider the existence of minima of~\eqref{eq:euler-opt} on the set $\overline{\Mc}^w \cap V$. This asserts that we can generate approximate solutions $u_1,u_2,\ldots \in \overline{\Mc}^w \cap V$ at a sequence of time steps using~\eqref{eq:euler-opt}, which will serve as the temporal discretization. It will be later ensured that for small enough time steps, we have $u_i \in \Mc \cap V$ if $u_0 \in \Mc \cap V$. Note that in any case the $u_i$ are not uniquely determined from $u_0$, since in general $\overline{\Mc}^w \cap V$ is not convex.

Since the function $F$ in~\eqref{eq:euler-opt} is convex on $V$ and $\overline{\Mc}^w$ is weakly sequentially closed in $H$ by definition, the existence of solutions to~\eqref{eq:euler-opt} is more or less standard.

\begin{lemma}
  \label{thm:existence-opt}
  The optimization problem \eqref{eq:euler-opt} has at least one solution.
\end{lemma}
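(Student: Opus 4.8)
The plan is to apply the direct method of the calculus of variations, exploiting that $F$ is coercive and weakly sequentially lower semicontinuous on the weakly closed admissible set $\overline{\Mc}^w \cap V$. First I would establish coercivity of $F$ on $V$. Using the uniform coercivity $a(u,u;t_{i+1}) \ge \mu \|u\|_V^2$ together with the elementary bound $\|u - u_i\|_H^2 \ge 0$ and the estimate $\abs{\langle f_{i+1}, u\rangle} \le \|f_{i+1}\|_H \|u\|_H \lesssim \|f_{i+1}\|_H \|u\|_V$ from the continuous embedding \eqref{eq:cont-embed}, one sees that
\[
F(u) \ge \tfrac{1}{2}\mu \|u\|_V^2 - C \|f_{i+1}\|_H \|u\|_V \to +\infty \quad \text{as } \|u\|_V \to \infty.
\]
Hence $F$ is bounded below, and any minimizing sequence $(w_n) \subset \overline{\Mc}^w \cap V$ is bounded in $V$.

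Next I would extract a weakly convergent subsequence and identify its limit as a minimizer. Since $V$ is a Hilbert space, the bounded minimizing sequence $(w_n)$ has a subsequence converging weakly in $V$ to some $w_\ast \in V$; because $V$ is compactly embedded in $H$, this subsequence converges strongly in $H$, and in particular weakly in $H$. The admissible set $\overline{\Mc}^w \cap V$ is weakly sequentially closed in $H$ by the definition of $\overline{\Mc}^w$, and as a (strongly, hence weakly) closed convex subset of $V$ it is weakly closed in $V$ as well; thus $w_\ast \in \overline{\Mc}^w \cap V$. It remains to check weak lower semicontinuity of each term of $F$. The strong $H$-convergence $w_n \to w_\ast$ makes the term $\tfrac{1}{2(t_{i+1}-t_i)}\|w_n - u_i\|_H^2$ converge to the corresponding value at $w_\ast$, and it makes the linear term $\langle f_{i+1}, w_n\rangle$ converge to $\langle f_{i+1}, w_\ast\rangle$. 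The quadratic form $u \mapsto \tfrac{1}{2} a(u,u;t_{i+1})$ is continuous and convex on $V$ (being an equivalent squared norm by coercivity and boundedness), hence weakly sequentially lower semicontinuous on $V$. Combining these,
\[
F(w_\ast) \le \liminf_{n\to\infty} F(w_n) = \inf_{u \in \overline{\Mc}^w \cap V} F(u),
\]
so $w_\ast$ is a minimizer.

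The main point requiring care is not any single estimate but the interplay of two topologies: coercivity and the quadratic form live naturally in $V$, whereas the weak sequential closedness of the nonconvex admissible set $\overline{\Mc}^w$ is only guaranteed in the $H$-topology. The compact embedding $V \hookrightarrow H$ is exactly what reconciles these, turning weak $V$-convergence of the minimizing sequence into strong $H$-convergence, so that the $H$-closedness of $\overline{\Mc}^w$ applies to the limit while the $V$-coercivity still controls the sequence. I would therefore make sure to invoke compactness at precisely this junction; the convexity of $F$ on $V$ stated just before the lemma is what secures the weak lower semicontinuity of the quadratic term without any further argument.
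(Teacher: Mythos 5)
Your proof is correct and follows essentially the same route as the paper: the direct method, combining coercivity of $F$ on $V$, weak sequential lower semicontinuity, and weak sequential closedness of the admissible set, with the $V$- and $H$-topologies reconciled at exactly the junction you identify. Two remarks are in order. First, your parenthetical justification that $\overline{\Mc}^w \cap V$ is weakly closed in $V$ ``as a closed \emph{convex} subset of $V$'' is false: this set is not convex, as the paper itself emphasizes just before the lemma (it is the reason the time steps $u_{i}$ are not uniquely determined). Fortunately the claim is redundant in your argument, since the strong $H$-convergence $w_n \to w_\ast$, the weak sequential closedness of $\overline{\Mc}^w$ in $H$, and the fact that $w_\ast \in V$ as a weak $V$-limit already give $w_\ast \in \overline{\Mc}^w \cap V$; you should simply delete the convexity remark. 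Second, the compact embedding $V \hookrightarrow H$ is not actually needed here: since $H^* \subseteq V^*$ in the Gelfand triplet, weak convergence in $V$ already implies weak convergence in $H$, which is all the paper invokes to show that $\overline{\Mc}^w \cap V$ is weakly sequentially closed in $V$. Your use of compactness is valid but stronger than necessary, and it forces the accompanying term-by-term treatment of $F$ (strong $H$-convergence for the quadratic $H$-term and the linear term, weak lower semicontinuity only for the $a$-term), where the paper gets weak sequential lower semicontinuity of all of $F$ at once from its convexity and continuity on $V$.
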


\begin{proof}
Since $F$ is convex and continuous on $V$ it is also weakly sequentially lower semicontinuous on $V$; see, e.g.,~\cite[Sec.~2.5, Lemma 5]{Zeidler95}. Note that $F$ has bounded sublevel sets on $V$ since the bilinear form $a(\cdot,\cdot;t_{i+1})$ is coercive by assumption. It now follows that $F$ attains a minimum on every weakly sequentially closed subset of $V$ by the standard arguments, since the intersection with a sublevel set remains weakly sequentially compact; see, e.g.~\cite[Prop.~38.12(d)]{Zeidler85}. It hence remains to verify that $\overline{\Mc}^w \cap V$ is weakly sequentially closed in $V$. Consider a sequence $(u_n)\subset \overline{\Mc}^w\cap V$ converging weakly (in $V$) to $u \in V$. Obviously, since $H^* \subseteq V^*$, weak convergence in $V$ implies weak convergence in $H$, and since $\overline{\Mc}^w$ is weakly sequentially closed in $H$, we get $u \in \overline{\Mc}^w \cap V$. This shows that this set is weakly sequentially closed in~$V$.
\end{proof}

\newcommand{\uah}{\hat u_{h}}
\newcommand{\uch}{\hat v_{h}}

\section{Existence and uniqueness of solutions}\label{sec: existence}

In the previous section we defined a time-stepping scheme through a sequence of optimization problems. Starting from $u_0 \in \Mc \cap V$ and setting
\[
h = T/N, \quad t_i = ih,
\]
this generates approximate solutions $u_1,\dots,u_N \in \overline{\Mc}^w \cap V$ at time points $t_i$. In this section we will study the properties of these solutions, and use them to prove existence of solutions to Problem~\ref{problem 1}. Specifically, we construct a function $\uah \colon [0,T] \to V$ by piecewise affine linear interpolation of $u_i$, and another function $\uch \colon [0,T] \to V$ by piecewise constant interpolation of $u_i$ such that $\uch(0) = u_0$ and $\uch(t) = u_i$ on $t \in (t_{i-1},t_i]$. We then verify that the weak limit of these sequences for $h\to 0$ provides a solution of Problem~\ref{problem 1}.

Before turning to their existence, we establish an independent uniqueness result for solutions to Problem~\ref{problem 1}. More precisely, we obtain uniqueness among solutions satisfying some minimal integrability in time, with the order determined by the exponent in assumption~{\upshape\textbf{A4}(b).

\begin{theorem}\label{thm: unique}
Let the assumptions stated in Section~\ref{sec:assumptions} hold and let $u \in W_2^1(0,T^*;V,H)$ be a solution of Problem~\ref{problem 1} on a time interval $[0,T^*]$. Assume that the continuous representative $u\in C(0,T^*;H)$ satisfies $u(t) \in \Mc$ for all $t\in [0,T^*)$. Moreover, assume that $u \in L_\eta(0,T^*;V)$ with $\eta>0$ as in~{\upshape \textbf{A4}(b)}. Then $u$ is the only solution of Problem~\ref{problem 1} in the space $W_2^1(0,T^*;V,H) \cap L_\eta(0,T^*;V)$.
\end{theorem}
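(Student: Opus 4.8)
The plan is to derive a Gronwall-type energy estimate for the difference of two solutions. Let $u, \tilde u \in W_2^1(0,T^*;V,H) \cap L_\eta(0,T^*;V)$ both solve Problem~\ref{problem 1} with the same data, and set $w = u - \tilde u$. Since $u,\tilde u \in W_2^1(0,T^*;V,H)$, the scalar function $t \mapsto \|w(t)\|_H^2$ is absolutely continuous with $\tfrac12\tfrac{d}{dt}\|w\|_H^2 = \langle w', w\rangle$ a.e. The difficulty is that $u$ and $\tilde u$ satisfy the weak equation against the \emph{different} test spaces $T_{u(t)}\Mc \cap V$ and $T_{\tilde u(t)}\Mc \cap V$, so $w$ itself is not an admissible test function. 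The remedy is to test the equation for $u$ with $P_{u(t)}w(t)$ and the equation for $\tilde u$ with $P_{\tilde u(t)}w(t)$; these are admissible by \textbf{A3}(b). Because $\Mc$ is a cone (\textbf{A1}), the curve $u$ lies in $\Mc$ and hence $u'(t)$ is a tangent vector, so $P_{u(t)}u'(t) = u'(t)$; using that $u'(t) \in H$ and that $P_{u(t)}$ is $H$-self-adjoint, the pairing $\langle u', P_u w\rangle$ collapses to $(u', w)$, and likewise for $\tilde u$. Subtracting the two tested equations therefore produces exactly $\tfrac12\tfrac{d}{dt}\|w\|_H^2$ from the time-derivative terms.

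For the elliptic terms I would exploit the splitting \textbf{A4}, writing $a(u,v;t) = \langle A_1(t)u, v\rangle + \langle A_2(t)u, v\rangle$. By \textbf{A4}(a) one has $\langle A_1(t)u, P_u w\rangle = \langle A_1(t)u, w\rangle$, while \textbf{A4}(b) ensures $A_2(t)u \in H$ so that $\langle A_2(t)u, P_u w\rangle = (A_2(t)u, P_u w)$. Combining these gives
\[
a(u, P_u w; t) = a(u, w; t) - (A_2(t)u, (I - P_u)w),
\]
and analogously for $\tilde u$. Subtracting, the leading elliptic contribution becomes $a(u,w;t) - a(\tilde u, w;t) = a(w,w;t)$, controlled from below by coercivity, $a(w,w;t) \ge \mu\|w\|_V^2 \ge 0$. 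What remains are the two projection-defect terms $(A_2 u, (I-P_u)w)$ and $(A_2\tilde u, (I-P_{\tilde u})w)$ together with the right-hand-side mismatch $(f, (P_u - P_{\tilde u})w)$.

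Each of these residual terms is quadratic in $\|w\|_H$, which is what makes Gronwall close. Since $(I-P_u)u = 0$ (again by the cone property), we have $(I-P_u)w = (I-P_u)(u - \tilde u)$, and the curvature bound \textbf{A2} gives $\|(I-P_u)w\|_H \le \kappa\|w\|_H^2$ as well as $\|(P_u - P_{\tilde u})w\|_H \le \kappa\|w\|_H^2$, while $\|A_2 u\|_H$ and $\|A_2\tilde u\|_H$ are bounded by $\gamma\|u\|_V^\eta$ and $\gamma\|\tilde u\|_V^\eta$ via \textbf{A4}(b). Altogether this yields
\[
\tfrac12\tfrac{d}{dt}\|w\|_H^2 \le \kappa\bigl(\gamma(\|u\|_V^\eta + \|\tilde u\|_V^\eta) + \|f\|_H\bigr)\|w\|_H^2 =: C(t)\|w\|_H^2,
\]
and here the hypothesis $u,\tilde u \in L_\eta(0,T^*;V)$ is exactly what guarantees $C \in L_1(0,T^*)$ (together with $f \in L_2 \subset L_1$). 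Since $w(0)=0$, Gronwall's lemma forces $w \equiv 0$.

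I expect the main obstacle to lie in two bookkeeping issues around the curvature estimate. First, the constants $\kappa,\gamma$ in \textbf{A2} and \textbf{A4}(b) are only available on weakly compact subsets of $\Mc$; I would obtain such a set by noting that $u([0,t])$ and $\tilde u([0,t])$ are compact (hence weakly compact) in $H$ and contained in $\Mc$ for every $t < T^*$. Second, \textbf{A2} holds only under the smallness condition $\|u - v\|_H \le \epsilon$, which is not known a priori. I would handle this by a continuation argument: letting $t_0$ be the supremum of times up to which $u \equiv \tilde u$, continuity gives $u(t_0) = \tilde u(t_0)$ and hence $\|w\|_H \le \epsilon$ on a short interval beyond $t_0$, on which the estimate above applies and forces $u \equiv \tilde u$, contradicting the maximality of $t_0$ unless $t_0 = T^*$. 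Equality on the closed interval then follows from continuity of the representatives in $C(0,T^*;H)$.
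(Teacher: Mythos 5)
Your core Gronwall argument is, modulo presentation, exactly the paper's proof: testing the two equations with $P_{u(t)}w$ and $P_{\tilde u(t)}w$ (admissible by \textbf{A3}(b)) and subtracting is algebraically the same as the paper's step of using the equations to replace $w$ by $(I-P_{u(t)})w$ and $(I-P_{\tilde u(t)})w$ in the two residuals; the use of \textbf{A4}(a) to annihilate the $A_1$-part, of \textbf{A4}(b) and \textbf{A2} to make every remaining term quadratic in $\|w\|_H$ with an $L_1$-in-time coefficient (which is where the $L_\eta$ hypothesis enters), and the continuation argument to exhaust $[0,T^*]$ all coincide with the paper. However, two of your auxiliary claims are defective as stated.

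First, ``$P_{u(t)}u'(t)=u'(t)$ because $\Mc$ is a cone'' is a non sequitur: \textbf{A1} gives $u(t)\in T_{u(t)}\Mc$, not $u'(t)\in T_{u(t)}\Mc$. The latter is in fact true for a.e.\ $t$, but proving it requires a.e.\ strong differentiability of $u$ in $H$ combined with the curvature bound \textbf{A2} (or a liberal reading of the definition of tangent vectors), and it is even more delicate for the competitor $\tilde u$, whose continuous representative is only known to lie in $\Mc$ almost everywhere. Fortunately the exact cancellation is unnecessary: keep the terms $(u',(I-P_{u(t)})w)$ and $(\tilde u',(I-P_{\tilde u(t)})w)$, bound them by $\kappa\bigl(\|u'\|_H+\|\tilde u'\|_H\bigr)\|w\|_H^2$ via \textbf{A2}, and observe that $\|u'\|_H+\|\tilde u'\|_H\in L_1(0,T^*)$; this is precisely what the paper does, and Gronwall closes all the same. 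Second, and more seriously, your weakly compact set is not legitimate: you assert $\tilde u([0,t])\subset\Mc$, but Problem~\ref{problem 1} only forces $\tilde u(t)\in\Mc$ for \emph{almost all} $t$ --- the everywhere-in-$\Mc$ hypothesis is made only for the distinguished solution $u$. Hence $u([0,t])\cup\tilde u([0,t])$ need not be contained in $\Mc$, and the constants $\kappa$, $\gamma$, $\epsilon$ of \textbf{A2} and \textbf{A4}(b) are not secured by your construction. The repair, which is what lies behind the paper's terse assertion that a suitable $t^\mystar$ and $\Mc'$ exist, is to work near a time $t_0$ at which the two solutions agree and $u(t_0)\in\Mc$: take $\Mc'=\{z\in\overline{\Mc}^w\colon\|z-u(t_0)\|_H\le\rho\}$, which is bounded and weakly sequentially closed, hence weakly compact, and is contained in $\Mc$ once $\rho<\dist_H\bigl(u(t_0),\overline{\Mc}^w\setminus\Mc\bigr)$; since $\tilde u(t)\in\overline{\Mc}^w$ for \emph{all} $t$ (strong closure by continuity and a.e.\ membership), continuity places both solutions in $\Mc'$ on a short interval past $t_0$. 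This meshes with your continuation argument and completes the proof.
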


\begin{proof}
Let $v$ be another solution of Problem~\ref{problem 1} in the space $W_2^1(0,T^*;V,H) \cap L_\eta(0,T^*;V)$. For both $u$ and $v$ we take the continuous representative in $C(0,T^*;H)$. 
Then there exists $0<t^\mystar\leq T^\mystar$ such that for all $t\in[0,t^\mystar]$ both $u(t)$ and $v(t)$ lie in a weakly compact set $\Mc'\subset \Mc$ and  satisfy $\|u(t)-v(t)\|_H\leq \epsilon(\Mc')$ as in assumption~\textbf{A2}. For almost all $t\in[0,t^\mystar]$,
\begin{align*}
\frac{1}{2}\frac{\dr}{\dr t}\|u(t)-v(t)\|^2_H &\le \langle u'(t)-v'(t) +A(t)(u(t)-v(t)),u(t)-v(t)\rangle\\
&= \begin{multlined}[t] \langle u'(t)+A(t)u(t)-f(t),u(t)-v(t)\rangle - \\
{}-{} \langle v'(t)+A(t)v(t)-f(t),u(t)-v(t)\rangle.
\end{multlined}\\
&= \begin{multlined}[t]\langle u'(t)+A(t)u(t)-f(t),(I-P_{u(t)})(u(t)-v(t))\rangle - \\
 {}-{} \langle v'(t)+A(t)v(t)-f(t),(I-P_{v(t)})(u(t)-v(t))\rangle
\end{multlined}\\
&\leq \begin{multlined}[t] \left(\|u'(t)\|_H+\gamma \|u(t)\|_V^\eta+\|f(t)\|_H\right)\|(I-P_{u(t)})(u(t)-v(t))\|_H + \\
 {}+{} \left( \|v'(t)\|_H+ \gamma \|v(t)\|_V^\eta+\|f(t)\|_H\right) \|(I-P_{v(t)})(u(t)-v(t))\|_H,
\end{multlined}
\end{align*}
where we have made use of the coercivity of $A(t)$,~\eqref{eq:prob1} and assumption~\textbf{A4}. With~\textbf{A2} this further yields
\begin{multline*}
\frac{1}{2}\frac{\dr}{\dr t}\|u(t)-v(t)\|^2_H \le \\
{}\leq{} \kappa \big(\|u'(t)\|_H+\|v'(t)\|_H+\gamma \|u(t)\|_V^\eta
+\gamma \|v(t)\|_V^\eta+2\|f(t)\|_H\big)\|u(t)-v(t)\|_H^2
\end{multline*}
for almost all $t \in [0, t^\mystar]$.
This upper bound is positive and integrable by assumption. Hence Gronwall's Lemma is applicable and yields $\|u(t)-v(t)\|_H=0$ for almost all $t\in [0,t^\mystar]$; by continuity, $u(t) = v(t)$ for all $t \in [0,t^\mystar]$.

Now let $[0,T')$ be a largest interval of the given form on which $u$ and $v$ agree. If $T' < T^*$, then by continuity $v(T') = u(T')$, and by assumption $u(T') \in \Mc$. The above local uniqueness argument can then be repeated from $T'$ on, which yields a contradiction. Hence we must have $T' = T^*$.
\end{proof}}

Note that $W_2^1(0,T^*;V,H) \subset L_\eta(0,T^*; V)$ when $\eta\leq 2$. Hence in this case, the additional integrability requirement in Theorem \ref{thm: unique} is void and we in fact obtain uniqueness in the full space $W_2^1(0,T^*;V,H)$. The integrability requirement is also satisfied for the local solutions obtained in our following main existence result.

\begin{samepage}
\begin{theorem}\label{thm: main theorem}
Let the assumptions stated in Section~\ref{sec:assumptions} hold.
\begin{itemize}
\item[\upshape (a)]
The functions $\uah$ and $\uch$ converge, up to subsequences, weakly in $L_2(0,T;V)$ and strongly in $L_2(0,T;H)$, to the same function $\hat u \in L_\infty(0,T;V)$ with $\hat u(0) = u_0$, while the weak derivatives $\uah'$ converge weakly to $\hat u'$ in $L_2(0,T;H)$, again up to subsequences. We have ${\hat u}(t) \in \overline{\Mc}^w \cap V$ for almost all $t \in [0,T]$.
\item[\upshape(b)]
Let $
\sigma = \dist_H(u_0,\overline{\Mc}^w \setminus \Mc) > 0$.
There exists a constant $c > 0$ independent of $\sigma$ such that ${\hat u}$ solves~\eqref{eq:prob1} for almost all $t < (\sigma/c)^2$, where we set $\dist_H(u_0, \emptyset) = \infty$, and ${\hat u}(t) \in \Mc$ for all $t < (\sigma/c)^2$.  
\end{itemize}
\end{theorem}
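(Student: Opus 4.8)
The plan is to pass to the limit $h\to0$ in the time-stepping scheme of Section~\ref{sec: temporal discretization}. By Lemma~\ref{thm:existence-opt} the iterates $u_1,\dots,u_N\in\overline{\Mc}^w\cap V$ are well defined, and by Lemma~\ref{mincondition} they satisfy \eqref{eq:euler-galerkin} whenever $u_{i+1}\in\Mc\cap V$. Everything rests on two $h$-uniform a priori estimates, an Aubin--Lions compactness argument, and a continuation argument controlling the $H$-distance travelled from $u_0$. For the first estimate I would test \eqref{eq:euler-galerkin} with $v=u_{i+1}$, which is admissible since $\Mc$ is a cone (\textbf{A1}), so $u_{i+1}\in T_{u_{i+1}}\Mc\cap V$. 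Using $(u_{i+1}-u_i,u_{i+1})=\tfrac12(\|u_{i+1}\|_H^2-\|u_i\|_H^2+\|u_{i+1}-u_i\|_H^2)$, coercivity, the embedding \eqref{eq:cont-embed} and Young's inequality, then summing over $i$ and bounding $h\sum\|f_{i+1}\|_H^2\le\|f\|_{L_2(0,T;H)}^2$, yields bounds on $\max_n\|u_n\|_H$, on $h\sum_n\|u_n\|_V^2$, and on $\sum_n\|u_{n+1}-u_n\|_H^2$, all uniform in $h$, which control $\uah$ and $\uch$ in $L_\infty(0,T;H)\cap L_2(0,T;V)$.

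The decisive second estimate bounds the discrete time derivative, obtained by testing \eqref{eq:euler-galerkin} with $w_i:=P_{u_{i+1}}(u_{i+1}-u_i)\in T_{u_{i+1}}\Mc\cap V$, admissible by the cone property and \textbf{A3}(b). Orthogonality of $P_{u_{i+1}}$ gives $(u_{i+1}-u_i,w_i)=\|w_i\|_H^2$; the symmetry of $a$ turns $a(u_{i+1},u_{i+1}-u_i;t_{i+1})$ into a telescoping energy difference plus the nonnegative term $\tfrac12 a(u_{i+1}-u_i,u_{i+1}-u_i;t_{i+1})$; and \eqref{eq:a-lipschitz} replaces $t_{i+1}$ by $t_i$ up to an error controlled by the first estimate. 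The normal component of the increment is handled by the splitting \textbf{A4}: by \textbf{A4}(a) the $A_1$-contribution on $(I-P_{u_{i+1}})(u_{i+1}-u_i)$ vanishes, while by \textbf{A4}(b) and the \emph{quadratic} curvature bound of \textbf{A2} the $A_2$-contribution is at most $\gamma\kappa\|u_{i+1}\|_V^\eta\|u_{i+1}-u_i\|_H^2$. Summing, I expect to obtain simultaneously a uniform bound on $\sum_n h^{-1}\|u_{n+1}-u_n\|_H^2=\|\uah'\|_{L_2(0,T;H)}^2$ and a pointwise energy bound $\|u_n\|_V\le C_V$, the latter giving $\hat u\in L_\infty(0,T;V)$.

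With $\uah$ bounded in $L_2(0,T;V)$, $\uah'$ bounded in $L_2(0,T;H)$ and $V\hookrightarrow H$ compact, the Aubin--Lions lemma yields a subsequence with $\uah\rightharpoonup\hat u$ in $L_2(0,T;V)$, $\uah'\rightharpoonup\hat u'$ in $L_2(0,T;H)$ and $\uah\to\hat u$ strongly in $L_2(0,T;H)$; since $\|\uch-\uah\|_{L_2(0,T;H)}^2\lesssim h\sum\|u_{i+1}-u_i\|_H^2\to0$, the interpolant $\uch$ shares this limit, and weak closedness of $\overline{\Mc}^w$ in $H$ gives $\hat u(t)\in\overline{\Mc}^w\cap V$ a.e.\ with $\hat u(0)=u_0$; this is part~(a). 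For part~(b), the bound $\|\uah(t)-u_0\|_H\le\sqrt{t}\,\|\uah'\|_{L_2(0,t;H)}\le c\sqrt{t}$ shows $\|\hat u(t)-u_0\|_H<\sigma$ for $t<(\sigma/c)^2$, whence $\hat u(t)\notin\overline{\Mc}^w\setminus\Mc$, i.e.\ $\hat u(t)\in\Mc$ there. To pass to the limit in the equation, for fixed $v\in V$ I would test \eqref{eq:euler-galerkin} with $P_{\uch}v\in T_{\uch}\Mc\cap V$, use \textbf{A4}(a) to write $a(\uch,P_{\uch}v;\cdot)=a(\uch,v;\cdot)-\langle A_2\uch,(I-P_{\uch})v\rangle$, and let $h\to0$: by \textbf{A2} and the strong $H$-convergence of $\uch$ one has $P_{\uch}v\to P_{\hat u}v$ in $H$, bounded in $V$ by \textbf{A3}(b), so every term converges (the last using the local $H$-boundedness of $A_2$ from \textbf{A4}(b)). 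Specializing the limit identity to $v=w\in T_{\hat u(t)}\Mc\cap V$, where $P_{\hat u(t)}w=w$, recovers \eqref{eq:prob1}.

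The crux is the second estimate, and specifically making $c$ independent of $\sigma$. The factor $\gamma\kappa\sim\sigma_*^{-2}$ entering through \textbf{A2} and \textbf{A4}(b) blows up near the relative boundary of $\Mc$, so a naive discrete Gronwall would yield a derivative bound, and hence a $c$, that degenerates as $\sigma\to0$. The escape is the quadratic form of \textbf{A2}: the offending term carries $\|u_{i+1}-u_i\|_H^2$, and once $\|u_n\|_V\le C_V$ is in force the per-step weight $h\|u_{i+1}\|_V^2$ tends to $0$, so the term is absorbed for small $h$ with an $h$- and $\sigma$-independent residual. Running this on the weakly compact set $\Mc'=\{u\in\overline{\Mc}^w:\|u-u_0\|_H\le\rho\}$ with $\rho<\sigma$ (on which $\sigma_*\ge\sigma-\rho>0$, so $\kappa,\gamma$ are finite), and closing the continuation argument---the iterates cannot leave $\Mc'$ before time $(\sigma/c)^2$ by the distance bound---is what produces the $\sigma$-independent $c$ and the stated horizon.
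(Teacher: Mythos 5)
Your overall architecture (discrete energy estimates, Aubin--Lions compactness, the distance bound $\|u_j-u_0\|_H\le c\sqrt{t_j}$, continuation on a weakly compact set, limit passage via \textbf{A2}--\textbf{A4}) matches the paper's, and your first energy estimate and the compactness argument for part~(a) are essentially the paper's. The genuine gap is in your second, ``decisive'' estimate on the discrete time derivative. You derive it by testing \eqref{eq:euler-galerkin} with $w_i=P_{u_{i+1}}(u_{i+1}-u_i)$, but \eqref{eq:euler-galerkin} is a first-order optimality condition that is available (Lemma~\ref{mincondition}) only when $u_{i+1}\in\Mc\cap V$, whereas the minimization \eqref{eq:euler-opt} only delivers $u_{i+1}\in\overline{\Mc}^w\cap V$. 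Knowing that the iterates stay in the smooth part $\Mc$ --- indeed in a weakly compact subset $\Mc'$ on which the constants $\kappa(\Mc')$, $\gamma(\Mc')$, $\epsilon(\Mc')$ of \textbf{A2}/\textbf{A4}(b) are finite, and with increments $\|u_{i+1}-u_i\|_H\le\epsilon(\Mc')$ so that \textbf{A2} applies at all --- is exactly what the distance bound is supposed to provide, and that distance bound is a consequence of the very estimate you are deriving. Your closing paragraph addresses only the blow-up of the constants ($\gamma\kappa\sim\sigma_*^{-2}$) by absorbing the quadratic term; it does not break this logical circle: at step $i+1$, before any derivative estimate exists, the only tool that controls the jump $\|u_{i+1}-u_i\|_H$ or keeps $u_{i+1}$ off $\overline{\Mc}^w\setminus\Mc$ is the feasibility comparison $F(u_{i+1})\le F(u_i)$ --- which is precisely the paper's argument and, once invoked, yields \eqref{eq:energy-diff} directly with no tangent-space testing, no \textbf{A2}, no \textbf{A4}, and no requirement $u_{i+1}\in\Mc$. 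This also matters for part~(a): the paper's three estimates hold on all of $[0,T]$ even if iterates hit $\overline{\Mc}^w\setminus\Mc$, whereas your estimate could at best be run while the iterates remain in $\Mc$, so the full-interval statements of~(a) ($\hat u\in L_\infty(0,T;V)$, $\hat u(t)\in\overline{\Mc}^w\cap V$ a.e.\ on $[0,T]$) would not follow.

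A secondary, fixable issue lies in the limit passage of part~(b): you fix $v\in V$, pass to the limit, and only then specialize to $v=w\in T_{\hat u(t)}\Mc\cap V$. For fixed $v$, the term $\langle A_2\uch,(I-P_{\uch})v\rangle$ has, by \textbf{A4}(b), only a weakly convergent subsequence in $H$, and nothing in the assumptions lets you identify its limit as $\langle A_2\hat u,(I-P_{\hat u})v\rangle$ (no continuity of $A_2$ along the sequence is assumed); moreover the exceptional null set in $t$ depends on $v$, so specializing to time-dependent tangent vectors after the limit is not licit. Both problems disappear if, as in the paper, you test from the start with time-dependent $w(t)\in T_{\hat u(t)}\Mc\cap V$ and integrate in time: then $(I-P_{\uch(t)})w(t)=(P_{\hat u(t)}-P_{\uch(t)})w(t)$, which tends to $0$ strongly in $H$ by \textbf{A2}, so the unidentified $A_2$-limit is never needed, and the a.e.\ statement for all tangent test functions follows from the integrated identity by the scaling argument at the end of the paper's proof.
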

\end{samepage}

Note that ${\hat u} \in L_\infty(0,T;V)$ with ${\hat u}' \in L_2(0,T; H)$ implies ${\hat u} \in W^1_2(0,T;V,H)$ in part~(a). A possible constant $c$ in statement (b) is provided by the right hand side of~\eqref{eq:energy-diff} in the energy estimates below, and thus in particular depends continuously on $\norm{u_0}_V$ and $\norm{f}_{L_2(0,T;H)}$. 
In the proof of the theorem, which will be given in sections~\ref{sec: discrete energy estimates}--\ref{sec: proof part b}, we adapt standard techniques for establishing the existence of limits of time discretizations to the abstract manifold setup developed above.

Combining Theorem~\ref{thm: main theorem} with a continuation argument and invoking Theorem~\ref{thm: unique}, we obtain a unique solution on a maximal time interval.

\begin{theorem}\label{thm: second main theorem}
Let the assumptions stated in Section~\ref{sec:assumptions} hold. There exist $T^* \in (0,T]$ and $u \in W_2^1(0,T^*;V,H) \cap L_\infty(0,T^*;V)$ such that $u$ solves Problem~\ref{problem 1} on the time interval $[0,T^*]$, and its continuous representative $u\in C(0,T^*;H)$ satisfies $u(t) \in \Mc$ for all $t \in [0,T^*)$. Here $T^*$ is maximal for the evolution on $\Mc$ in the sense that if $T^* < T$, then 
\[
  \liminf_{t \to T^*} \;\dist_H(u(t), \overline{\Mc}^w \setminus \Mc ) = 0.
\]
In either case, $u$ is the unique solution of Problem~\ref{problem 1} in $W_2^1(0,T^*;V,H) \cap L_\eta(0,T^*;V)$.
\end{theorem}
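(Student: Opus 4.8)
The plan is to build a maximal solution by continuation, using Theorem~\ref{thm: main theorem} to generate local pieces and Theorem~\ref{thm: unique} to glue them consistently. Let $\mathcal{S}$ be the set of those $s \in (0,T]$ for which Problem~\ref{problem 1} has a solution $u_s \in W_2^1(0,s;V,H) \cap L_\infty(0,s;V)$ whose continuous representative stays in $\Mc$ on $[0,s)$. Theorem~\ref{thm: main theorem}, applied with initial value $u_0$, produces such a solution on $[0,(\sigma_0/c)^2)$ with $\sigma_0 = \dist_H(u_0,\overline{\Mc}^w\setminus\Mc) > 0$, so $\mathcal{S}\neq\emptyset$ and $T^* := \sup\mathcal{S} > 0$. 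Every solution restricts to a solution on shorter intervals, so $\mathcal{S}$ is an interval. For $s<s'$ in $\mathcal{S}$, both associated solutions lie in $W_2^1(0,s;V,H)\cap L_\eta(0,s;V)$ --- here I use that $L_\infty(0,s;V)\subseteq L_\eta(0,s;V)$ on the finite interval --- and stay in $\Mc$ on $[0,s)$, so Theorem~\ref{thm: unique} forces them to agree on $[0,s]$. The pieces therefore glue into a single $u$ defined on $[0,T^*)$ that solves~\eqref{eq:prob1} for almost every $t<T^*$ with $u(t)\in\Mc$ there.

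Next I would show $T^*\in\mathcal{S}$, i.e.\ that $u$ extends to a solution on the closed interval $[0,T^*]$. The a priori energy estimates underlying Theorem~\ref{thm: main theorem} (cf.~\eqref{eq:energy-diff}) bound $\norm{u}_{L_\infty(0,s;V)}$ and $\norm{u'}_{L_2(0,s;H)}$ in terms of $\norm{u_0}_V$ and $\norm{f}_{L_2(0,T;H)}$ alone, uniformly in $s<T^*$; letting $s\uparrow T^*$ gives $u\in W_2^1(0,T^*;V,H)\cap L_\infty(0,T^*;V)$ with continuous $H$-representative on $[0,T^*]$. Hence $u(T^*)=\lim_{t\to T^*}u(t)$ exists in $H$. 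It lies in $\overline{\Mc}^w$ because $u(t)\in\Mc\subseteq\overline{\Mc}^w$ for $t<T^*$, strong convergence implies weak convergence, and $\overline{\Mc}^w$ is weakly sequentially closed; and it lies in $V$ because $\norm{u(t)}_V$ stays bounded, so a sequence $u(t_n)$ converges weakly in $V$ to $u(T^*)$. As $[0,T^*)$ and $[0,T^*]$ differ by a null set, $u$ solves Problem~\ref{problem 1} on $[0,T^*]$ with $u(t)\in\Mc$ for $t\in[0,T^*)$, so $T^*\in\mathcal{S}$.

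The dichotomy at $T^*$ follows from a restart argument. Assume $T^*<T$ and, for contradiction, $\liminf_{t\to T^*}\dist_H(u(t),\overline{\Mc}^w\setminus\Mc)=\delta>0$. By continuity $\dist_H(u(T^*),\overline{\Mc}^w\setminus\Mc)\ge\delta$, so $u(T^*)\notin\overline{\Mc}^w\setminus\Mc$; together with $u(T^*)\in\overline{\Mc}^w\cap V$ from the previous step this gives $u(T^*)\in\Mc\cap V$ at positive distance from the boundary. Applying Theorem~\ref{thm: main theorem} with initial value $u(T^*)$ and right-hand side $f(\cdot+T^*)$ yields a solution on $[T^*,T^*+(\delta/c)^2)$ that stays in $\Mc$. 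Concatenating it with $u$ produces a function that is $W_2^1$ across $T^*$ (both pieces share the continuous $H$-value $u(T^*)$ and are individually in $W_2^1$), solves~\eqref{eq:prob1} almost everywhere, and stays in $\Mc$ strictly before its right endpoint; this places a value $s>T^*$ in $\mathcal{S}$, contradicting $T^*=\sup\mathcal{S}$. Thus $T^*<T$ forces $\liminf_{t\to T^*}\dist_H(u(t),\overline{\Mc}^w\setminus\Mc)=0$.

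Uniqueness is then immediate: since $u\in W_2^1(0,T^*;V,H)\cap L_\infty(0,T^*;V)\subseteq W_2^1(0,T^*;V,H)\cap L_\eta(0,T^*;V)$ and $u(t)\in\Mc$ for $t\in[0,T^*)$, Theorem~\ref{thm: unique} applies directly. I expect the main obstacle to be the closed-interval step: verifying that the glued solution keeps its $L_\infty(0,T^*;V)$ bound up to $T^*$ and that its $H$-continuous limit is recovered inside $\Mc\cap V$. This is exactly where the $\sigma$-independence of the energy bounds and the weak sequential closedness of $\overline{\Mc}^w$ are indispensable --- a naive estimate that let the $V$-norm grow with the number of restarts, or with $1/\sigma$, would break the continuation precisely when the solution nears the boundary.
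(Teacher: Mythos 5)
Your proposal follows the same overall strategy as the paper: generate local solutions with Theorem~\ref{thm: main theorem}, glue them consistently via the uniqueness statement of Theorem~\ref{thm: unique}, and obtain the blow-up alternative by a restart argument. The organizational difference is that the paper constructs an increasing sequence of times $T_1<T_2<\cdots$ inductively and reads the alternative directly off the increments: either some $T_i=T$, or else $T_{i+1}-T_i\geq\tfrac12(\sigma_i/c)^2$ for all $i$, so that $\inf_i\sigma_i>0$ would make the increments bounded below, contradicting $T_i\leq T^*\leq T$; hence $\liminf_i\sigma_i=0$, which gives the stated liminf because $T_i\to T^*$. In particular, the paper never needs to extend the solution to the closed interval $[0,T^*]$, nor to show $u(T^*)\in\Mc\cap V$.

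The genuine gap in your argument sits exactly at the point you flag at the end but do not resolve: the claim that the energy estimates bound $\|u\|_{L_\infty(0,s;V)}$ and $\|u'\|_{L_2(0,s;H)}$ in terms of $\|u_0\|_V$ and $\|f\|_{L_2(0,T;H)}$ alone, uniformly in $s<T^*$. The estimates \eqref{eq:energy-diff} and \eqref{eq:boundedness in V} concern the time-stepping scheme started at $u_0$, and by Theorem~\ref{thm: main theorem}(b) the limit of that scheme is known to solve Problem~\ref{problem 1} only up to a time of order $(\sigma_0/c)^2$; beyond that time you cannot identify it with your glued solution, since Theorem~\ref{thm: unique} compares only functions that are already known to be solutions. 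The later pieces of the glued solution satisfy these estimates with $u_0$ replaced by the restart values $u(T_i)$, and chaining \eqref{eq:boundedness in V} gives $\|u(T_{i+1})\|_V^2\leq C_3\bigl(\|u(T_i)\|_V^2+\cdots\bigr)$ with $C_3\geq\beta/\mu\geq 1$, so the naive bound grows geometrically in the number of restarts---precisely the failure mode you warn against. The ``$\sigma$-independence'' of $c$ does not rescue this: $c$ is independent of $\sigma$ but not of the $V$-norm of the starting value. Closing the gap requires an additive estimate across restarts, for instance telescoping the energy $t\mapsto a(u(t),u(t);t)$, whose increase over $[T_i,T_{i+1}]$ is controlled by $\|u(T_i)\|_H^2-\|u(T_{i+1})\|_H^2$ plus local $f$-terms, quantities that sum to a bound independent of the number of pieces. (To be fair, the paper's proof also tacitly uses a restart-independent $c$ in its increment argument; but since it dispenses with the closed-interval extension and the restart at $T^*$, it relies on this uniformity in only one place, whereas your route needs the full uniform $L_\infty(V)$ and $L_2(H)$ bounds even to define $u(T^*)$ and place it in $\Mc\cap V$.)
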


Recall that for the model problem~\eqref{eq:dlr}, one has $\eta = 2$ by \eqref{eq: estimate for A4b}, and so this problem has a unique solution $u$ in $W_2^1(0,T^*;H_0^1(\Omega),L_2(\Omega))$, and the $u(t)$ are of full rank $r$ in the time interval $[0,T^*)$.

\begin{proof}[Proof of Theorem~\ref{thm: second main theorem}]
The uniqueness of such a solution is immediate from Theorem~\ref{thm: unique}. Hence we only need to show existence. Theorem \ref{thm: main theorem}(b) provides us with a solution $u$ of Problem~\ref{problem 1} on a time interval $[0,T_1]$ with $0<T_1 \leq T$ such that $u \in L_\infty(0,T_1; V)$ and either $T_1 = T$ or $T_1 \geq \frac12 (\sigma_0 / c)^2$ with $\sigma_0 = \dist_H(u_0, \overline{\Mc}^w \setminus \Mc) $ and $c>0$. In the latter case, we may assume without loss of generality that $u \in C(0,T_1;H)$ and $u(T_1) \in \Mc\cap V$. Let $\sigma_1 = \dist_H(u(T_1), \overline{\Mc}^w \setminus \Mc) $. 
 If $T_1 < T$, applying again Theorem~\ref{thm: main theorem} on $[T_1, T]$ with starting value $u_0 = u(T_1)$, we obtain a continuation of $u$ to an interval $[0,T_2]$ with either $T_2 = T$ or $T_2 \geq T_1 + \frac12 (\sigma_1 / c)^2$. In the latter case, we can again assume $u \in C(0,T_2;H)$ and $u(T_2) \in V$ with corresponding distance $\sigma_2>0$. We thus inductively obtain sequences $T_1, T_2, \ldots$ and positive distances $\sigma_1, \sigma_2,\ldots$ which either terminate with $T_i = T$ for some $i$, in which case we are done. Otherwise, $T_i$ is defined for all $i$ and $T_i \to T^* \leq T$. Clearly, the constructed $u \in C(0,T^*;H)$ solves \eqref{eq:prob1} on $[0,T^*)$. If $\inf_i \sigma_i > 0$, then $T_{i+1} - T_i$ is bounded from below, which contradicts $T_i \leq T^*$. Thus $\liminf_{i\to \infty} \sigma_i = 0$, which implies the assertion.
\end{proof}

\subsection{Discrete energy estimates}\label{sec: discrete energy estimates}

First we prove several a priori estimates of the time-discrete solution and its finite differences with respect to time, which are modifications of standard results for time stepping of parabolic PDEs; see, e.g., \cite{Girault79,Emmrich04,Bartels15}. As can be seen from the proof, the assumed cone property \textbf{A1} of $\Mc$ is crucial. 
\begin{lemma}
  \label{thm:energy-discr}
  The sequence $(u_i)_{i=0}^N \subset \overline{\Mc}^w \cap V$ generated by \eqref{eq:euler-opt} with the time step $h = T/N$ satisfies the estimates
  \begin{gather}
    \label{eq:energy-u}
    \|u_N\|_H^2 + \sum_{i=1}^N\|u_i-u_{i-1}\|_H^2 + \mu h\sum_{i=1}^N\|u_i\|_V^2 \le \|u_0\|_H^2 + C_1 \norm{f}_{L_2(0,T;H)}^2, \\
    \label{eq:energy-diff}
    h\sum_{i=1}^N \left\|\frac{u_i-u_{i-1}}{h}\right\|_H^2 \le C_2\Bigl(\|u_0\|_V^2 + \norm{f}_{L_2(0,T;H)}^2  \Bigr), \\
    \label{eq:boundedness in V}
    \|u_i\|_V^2 \le C_3 \Big(\|u_0\|_V^2 + \norm{f}_{L_2(0,T;H)}^2 \Big),\qquad i = 1,\dots,N,
  \end{gather}
  where $C_1, C_2, C_3>0$ depend on $\beta$, $\mu$, $L$, $T$, and on the constant for the continuity of the embedding $V \subseteq H$ in \eqref{eq:cont-embed}. As a result, $\uah$ and $\uch$ are bounded in $L_\infty(0,T;V)$, uniformly for $h \to 0$.    
\end{lemma}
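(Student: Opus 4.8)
The plan is to derive all three estimates from the variational characterization \eqref{eq:euler-opt} of the time steps, using the cone property \textbf{A1} to compensate for the fact that admissible test directions are confined to the tangent space. Two consequences of minimality drive the argument. First, since $\overline{\Mc}^w$ is a cone, the ray $s \mapsto s u_i$ stays in $\overline{\Mc}^w \cap V$, so $s \mapsto F(s u_i)$ is minimized at $s=1$; differentiating at $s=1$ gives the ``radially tested'' identity
\[
\tfrac{1}{h}(u_i - u_{i-1}, u_i) + a(u_i, u_i; t_i) = \langle f_i, u_i\rangle .
\]
Second, because $u_{i-1} \in \overline{\Mc}^w \cap V$ is a feasible competitor for the minimizer $u_i$, we have $F(u_i) \le F(u_{i-1})$, which after cancelling common terms reads
\[
\tfrac{1}{2h}\|u_i - u_{i-1}\|_H^2 + \tfrac12 a(u_i,u_i;t_i) \le \tfrac12 a(u_{i-1},u_{i-1};t_i) + \langle f_i, u_i - u_{i-1}\rangle .
\]

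For \eqref{eq:energy-u} I would start from the radial identity, apply the polarization identity $(u_i - u_{i-1}, u_i) = \tfrac12(\|u_i\|_H^2 - \|u_{i-1}\|_H^2 + \|u_i - u_{i-1}\|_H^2)$, invoke coercivity $a(u_i,u_i;t_i) \ge \mu\|u_i\|_V^2$, and split off $\tfrac\mu2\|u_i\|_V^2$ from $\langle f_i, u_i\rangle$ by Young's inequality together with the embedding \eqref{eq:cont-embed}. Multiplying by $2h$ and summing telescopes the $\|u_i\|_H^2$ terms, while $h\sum_i\|f_i\|_H^2 \le \|f\|_{L_2(0,T;H)}^2$ follows from Jensen's inequality applied to the averages \eqref{eq:fidef}; this yields \eqref{eq:energy-u}.

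The most delicate estimate is the $V$-bound \eqref{eq:boundedness in V}, and I expect this to be the main obstacle: a naive recursion on $a(u_i,u_i;t_i)$ fails, since bounding $\langle f_i, u_i\rangle$ by $\tfrac\mu4\|u_i\|_V^2 + \cdots$ forces an $h$-independent multiplicative constant and hence a geometric blow-up over $N = T/h$ steps. The key observation is instead to \emph{retain} the term $\tfrac1{2h}\|u_i - u_{i-1}\|_H^2$ in the comparison inequality and absorb the forcing into it: estimating $\langle f_i, u_i - u_{i-1}\rangle \le \tfrac1{2h}\|u_i - u_{i-1}\|_H^2 + \tfrac h2\|f_i\|_H^2$ cancels the difference term and leaves $\tfrac12 a(u_i,u_i;t_i) \le \tfrac12 a(u_{i-1},u_{i-1};t_i) + \tfrac h2\|f_i\|_H^2$. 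Converting $a(u_{i-1},u_{i-1};t_i)$ to $a(u_{i-1},u_{i-1};t_{i-1})$ via the Lipschitz bound \eqref{eq:a-lipschitz} and coercivity gives, with $y_i := a(u_i,u_i;t_i)$, the clean recursion $y_i \le (1 + \tfrac{L\beta}{\mu}h)y_{i-1} + h\|f_i\|_H^2$. Discrete Gronwall then bounds $y_i$ by $e^{L\beta T/\mu}(\beta\|u_0\|_V^2 + \|f\|_{L_2(0,T;H)}^2)$, and coercivity turns this into \eqref{eq:boundedness in V}.

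Finally, for the difference estimate \eqref{eq:energy-diff} I return to the comparison inequality, now writing $\tfrac12[a(u_{i-1},u_{i-1};t_i) - a(u_i,u_i;t_i)] = \tfrac12(y_{i-1} - y_i) + \tfrac12[a(u_{i-1},u_{i-1};t_i) - a(u_{i-1},u_{i-1};t_{i-1})]$ and absorbing $\langle f_i, u_i - u_{i-1}\rangle$ only partially (by $\tfrac1{4h}\|u_i-u_{i-1}\|_H^2 + h\|f_i\|_H^2$) so that a fraction of $\tfrac1{2h}\|u_i-u_{i-1}\|_H^2$ survives on the left. Summing over $i$, the $y_{i-1}-y_i$ part telescopes to at most $\tfrac\beta2\|u_0\|_V^2$, the Lipschitz remainder is controlled by $\tfrac{L\beta h}{2}\sum_i\|u_{i-1}\|_V^2$ which is bounded through the already-established \eqref{eq:boundedness in V} (or \eqref{eq:energy-u}), and $h\sum_i\|f_i\|_H^2 \le \|f\|_{L_2(0,T;H)}^2$ as before; this gives \eqref{eq:energy-diff}. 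The uniform $L_\infty(0,T;V)$ bound on $\uah$ and $\uch$ is then immediate, since \eqref{eq:boundedness in V} bounds every $\|u_i\|_V$ by a constant independent of $h$, and the interpolants take values among convex combinations of the $u_i$.
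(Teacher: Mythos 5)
Your proof is correct, and its two driving mechanisms are exactly the paper's: the radial identity obtained by differentiating $s \mapsto F(s u_i)$ along the cone (assumption \textbf{A1}), which is how the paper justifies testing \eqref{eq:euler-galerkin} with $v = u_i$ even when $u_i \in \overline{\Mc}^w \setminus \Mc$, and the minimality comparison $F(u_i) \le F(u_{i-1})$. Your treatment of \eqref{eq:energy-u} and \eqref{eq:energy-diff} matches the paper's up to constants. The one genuine difference is how you close \eqref{eq:boundedness in V}: you derive the recursion $y_i \le (1 + \tfrac{L\beta}{\mu}h)\,y_{i-1} + h\|f_i\|_H^2$ for $y_i = a(u_i,u_i;t_i)$ and apply discrete Gronwall, yielding a constant of order $e^{L\beta T/\mu}$. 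The paper avoids Gronwall entirely: from the same intermediate inequality (its \eqref{eq:to-telescope}) it drops the nonnegative difference-quotient term, sums over the steps, and then bounds the accumulated Lipschitz remainder $\beta L h \sum_j \|u_j\|_V^2$ directly by the already-established estimate \eqref{eq:energy-u}, which controls precisely this weighted sum. That choice buys constants growing only polynomially (essentially linearly) in $L$ and $T$, versus your exponential dependence; both variants satisfy the lemma as stated, since the constants are only required to depend on $\beta$, $\mu$, $L$, $T$, and the embedding constant. It also explains why the paper can derive \eqref{eq:energy-diff} and \eqref{eq:boundedness in V} in either order from the single inequality \eqref{eq:to-telescope}, whereas your route establishes the $V$-bound first; your diagnosis that naive absorption of $\langle f_i, u_i\rangle$ into coercivity causes geometric blow-up over $N = T/h$ steps is the right one, and both fixes resolve it.
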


\begin{proof}
  Since $\overline{\Mc}^w$ is a cone, and $u_{i+1}\in\overline{\Mc}^w\cap V$ minimizes $F$ in~\eqref{eq:euler-opt}, it follows directly that $u_{i+1}$ satisfies the optimality condition \eqref{eq:euler-galerkin} with $v = u_{i+1}$ (even when $u_{i+1} \in \overline{\Mc}^w \setminus \Mc$), that is, we have
    \[
    (u_{i+1} - u_i, u_{i+1}) + ha(u_{i+1}, u_{i+1}; t_{i+1}) = h\langle f_{i+1},u_{i+1}\rangle.
  \]
Using the identity
  \[
    (u_{i+1}-u_i, u_{i+1}) = \frac{1}{2}\big(\|u_{i+1}\|^2 - \|u_i\|^2 + \|u_{i+1}-u_i\|^2\big),
  \]
  this reads
  \[
  \|u_{i+1}\|_H^2 - \|u_i\|_H^2 + \|u_{i+1}-u_i\|_H^2 + 2ha(u_{i+1}, u_{i+1}; t_{i+1}) = 2h\langle f_{i+1},u_{i+1}\rangle.
  \]
The coercivity of $a$ implies
\[
    \|u_{i+1}\|_H^2 - \|u_i\|_H^2 + \|u_{i+1}-u_i\|_H^2 + 2h\mu \|u_{i+1}\|_V^2 \le 2h\|f_{i+1}\|_{V^*}\|u_{i+1}\|_V,
 \]
 which
 leads to
 \begin{equation*}
    \|u_{i+1}\|_H^2 - \|u_i\|_H^2 + \|u_{i+1}-u_i\|_H^2 + h\mu \|u_{i+1}\|_V^2 \le \frac{h}{\mu}\|f_{i+1}\|_{V^*}^2,
        \label{eq:e-est-timestep}
  \end{equation*}
  where we have used the geometric mean inequality in the form $2xy \le \mu^{-1} x^2 + \mu y^2$. By summation over $i$ we obtain
     \[
    \|u_N\|_H^2 + \sum_{i=1}^N\|u_i-u_{i-1}\|_H^2 + h\mu \sum_{i=1}^N\|u_i\|_V^2 \le \|u_0\|_H^2 + \frac{h}{\mu}\sum_{i=1}^N\|f_i\|_{V^*}^2.
   \]
The embedding $V \subseteq H$ is continuous, cf.~\eqref{eq:cont-embed}, which implies that also the embedding $H \cong H^* \subseteq V^*$ is continuous. Thus 
\begin{equation}\label{eq:fsumest}
\begin{aligned}
  h \sum_{i=1}^N\|f_i\|_{V^*}^2 & \leq
    C h \sum_{i=1}^N\|f_i\|_{H}^2 \leq
     C h \sum_{i=1} \frac{1}{h^2} \left( \int_{t_{i-1}}^{t_i} 1 \mathrm{d}t \right)\left(  \int_{t_{i-1}}^{t_i} \norm{f(t)}_H^2\,\mathrm{d} t
 \right)    \\
  & \leq
      C \norm { f }_{L_2(0,T; H)}^2
      \end{aligned}
\end{equation}
with a constant $C>0$ depending only on the one in \eqref{eq:cont-embed}, where we have used the Cauchy-Schwarz inequality and the definition~\eqref{eq:fidef} of $f_i$.
This gives~\eqref{eq:energy-u}.
  
  Next we show~\eqref{eq:energy-diff}. Since $u_{i+1}$ minimizes $F$, 
  \begin{align*}
    2F(u_{i+1}) &= \frac{1}{h}\|u_{i+1}-u_i\|_H^2 + a(u_{i+1},u_{i+1}; t_{i+1}) - 2\langle f_{i+1},u_{i+1}\rangle \\
    &\le a(u_i,u_i; t_{i+1}) - 2\langle f_{i+1},u_i\rangle = 2F(u_i),
  \end{align*}
  which can be rearranged to
  \begin{align}
    h\left\|\frac{u_{i+1}-u_i}{h}\right\|_H^2 &\le a(u_i,u_i; t_{i+1}) - a(u_{i+1},u_{i+1}; t_{i+1}) + 2h\left\langle f_{i+1},\frac{u_{i+1}-u_i}{h}\right\rangle \notag \\
    &\le a(u_i,u_i; t_{i+1}) - a(u_{i+1},u_{i+1}; t_{i+1}) + 2h\|f_{i+1}\|_H^2 + \frac{h}{2}\left\|\frac{u_{i+1}-u_i}{h}\right\|_H^2, \notag
    \end{align}
    using a similar trick as above. This yields
    \begin{align}
    \label{eq:to-telescope}
    h\left\|\frac{u_{i+1}-u_i}{h}\right\|_H^2 &\le 2a(u_i,u_i; t_{i+1}) - 2a(u_{i+1},u_{i+1}; t_{i+1}) + 4h\|f_{i+1}\|_H^2.
  \end{align}
  We sum over $i$ and get
  \begin{multline*}
    h\sum_{i=1}^N \left\|\frac{u_i-u_{i-1}}{h}\right\|_H^2 \le 2a(u_0,u_0; 0) + 2\sum_{i=1}^N \big(a(u_{i-1},u_{i-1}; t_i) - a(u_{i-1},u_{i-1}; t_{i-1})\big) \\
    \quad- 2a(u_N,u_N; T) + 4h\sum_{i=1}^N \|f_i\|_H^2.
  \end{multline*}
  Using the Lipschitz continuity \eqref{eq:a-lipschitz} in $t$ of the bilinear form then allows the estimates
  \begin{align*}
    h\sum_{i=1}^N \left\|\frac{u_i-u_{i-1}}{h}\right\|_H^2 &\le 2\beta\|u_0\|_V^2 + 2\beta Lh \sum_{i=1}^N \|u_{i-1}\|_V^2 + 4h\sum_{i=1}^N \|f_i\|_H^2 \\
    &\le 2\beta(1+Lh)\|u_0\|_V^2 + 2\beta Lh \sum_{i=1}^N \|u_i\|_V^2 + 4h\sum_{i=1}^N \|f_i\|_H^2.
  \end{align*}
  By \eqref{eq:energy-u}, which we already proved,
  \begin{align*}
    h\sum_{i=1}^N \left\|\frac{u_i-u_{i-1}}{h}\right\|_H^2 \le 2\beta(1+Lh)\|u_0\|_V^2 + 4h\sum_{i=1}^N \|f_i\|_H^2 
     + \frac{2\beta L}{\mu}\Big(\|u_0\|_H^2 + \frac{h}{\mu}\sum_{i=1}^N\|f_i\|_{V^*}^2\Big).
  \end{align*}
     This allows us to simplify the above expression, and using \eqref{eq:fsumest} we recover~\eqref{eq:energy-diff}.
  
  Finally, we prove~\eqref{eq:boundedness in V}. Starting from \eqref{eq:to-telescope}, we readily obtain
  \[
    0 \le a(u_{j-1},u_{j-1}; t_j) - a(u_j,u_j; t_j) + 2h\|f_j\|_H^2.
  \]
  We sum over $j = 1,\dots,i$ and rearrange:
  \begin{equation*}
    a(u_i,u_i; t_i) \le a(u_0,u_0; 0) + \sum_{j=1}^i\big(a(u_{j-1},u_{j-1};t_j) - a(u_{j-1},u_{j-1};t_{j-1})\big) + 2h\sum_{j=1}^i \|f_j\|_H^2.
     \end{equation*}
     This implies
     \begin{align*}
   \mu\|u_i\|_V^2 &\le \beta\|u_0\|_V^2 + \beta Lh\sum_{j=1}^i\|u_{j-1}\|_V^2 + 2h\sum_{j=1}^i \|f_j\|_H^2 \\
    &\le \beta(1+Lh)\|u_0\|_V^2 + \beta Lh\sum_{j=1}^N\|u_j\|_V^2 + 2h\sum_{j=1}^N \|f_j\|_H^2
    \end{align*}
  for any $i = 1,\dots,N$. Using \eqref{eq:energy-u} and \eqref{eq:fsumest} yields~\eqref{eq:boundedness in V}.
\end{proof}

\begin{remark}
In standard estimates of the solution on the full linear space, the difference quotient \eqref{eq:energy-diff} is typically bounded in $L_2(0,T; V^*)$ in terms of $\|u_0\|_H$ and $\|f\|_{L_2(0,T;V^*)}$, cf.~\cite{Emmrich04}. One then uses the boundedness of $a(\cdot,\cdot; t)$ and $f$ to get
\[
  \left\langle\frac{u_{i+1}-u_i}{h}, v\right\rangle = -a(u_{i+1},v; t_{i+1}) + \langle f_{i+1},v\rangle \le \beta\|u_{i+1}\|_V\|v\|_V + \|f_{i+1}\|_{V^*}\|v\|_V.
\]
Dividing by $\|v\|_V$ and taking the supremum over $V\backslash\{0\}$ gives
\begin{align*}
  \left\|\frac{u_{i+1}-u_i}{h}\right\|_{V^*} \le \beta\|u_{i+1}\|_V + \|f_{i+1}\|_{V^*}
  \end{align*}
  and
\begin{align*}
  h\sum_{i=1}^N\left\|\frac{u_i-u_{i-1}}{h}\right\|_{V^*} \lesssim \Big(\|u_0\|_H + h\sum_{i=1}^N\|f_i\|_{V^*}\Big).
\end{align*}
However, we can not do this for solutions constrained to $\Mc$. Since the difference quotient is not necessarily in the tangent space, testing only with the tangent space does not give us the supremum and thus not the dual norm. We use a different reasoning in Lemma~\ref{thm:energy-discr}, testing with $v = u_{i+1}$, where $u_{i+1} \in T_{u_{i+1}} \Mc$ by the cone property~\textbf{A1}. We thus obtain a bound in $L_2(0,T; H)$-norm in terms of $\|u_0\|_V$ and $\|f\|_{L_2(0,T;H)}$ instead.
\end{remark}

\subsection{Proof of Theorem~\ref{thm: main theorem}(a)}

We now prove statement (a) of Theorem~\ref{thm: main theorem}. The argument for showing the existence of the limiting function ${\hat u}$ relies on standard compactness arguments based on the energy estimates in Lemma~\ref{thm:energy-discr}. Showing that ${\hat u}(t) \in \overline{\Mc}^w$ for almost all $t$ is then based on the fact that this set is weakly sequentially closed by definition. 

It follows from Lemma~\ref{thm:energy-discr} that $\uah$ and $\uch$ are bounded in $L_2(0,T;V)$, uniformly with respect to $h$. Therefore, refinement in time generates sequences in $L_2(0,T;V)$ which, up to subsequences, converge weakly,
\[
  \uah \wto {\hat u} \quad \text{and} \quad \uch \wto \hat v \quad \text{in } L_2(0,T;V).
\]
In particular, $\uah - \uch$ converges weakly in $L_2(0,T;H)$ to ${\hat u} - \hat v$. Comparing the two sequences in $L_2(0,T;H)$, we get
\begin{align*}
  \int_0^T \|\uah - \uch\|_H^2\,\dr t &= \sum_{i=1}^N\int_{t_{i-1}}^{t_i} \|\uah - \uch\|_H^2\,\dr t \\
  &= \sum_{i=1}^N\int_{t_{i-1}}^{t_i} \left\| \left(\frac{t_i-t}{h}\right) u_{i-1} + \left(\frac{t-t_{i-1}}{h}\right) u_i - u_i \right\|_H^2\,\dr t \\
  &= h\sum_{i=1}^N\int_0^1 \|(s-1)(u_i - u_{i-1})\|_H^2\,\dr s \\
  &= \frac{h}{3} \sum_{i=1}^N\|u_i - u_{i-1}\|_H^2,
\end{align*}
and by Lemma~\ref{thm:energy-discr},
\beq
  \label{eq:Uh-Vh-equal}
  \int_0^T \|\uah - \uch\|_H^2\,\dr t \le \frac{C_2 h^2}{3}\Bigl(\|u_0\|_V^2 + \norm{f}_{L_2(0,T;H)}^2  \Bigr),
\eeq
which tends to zero as $h \to 0$. We conclude ${\hat u} = \hat v$.

Likewise, $\uah'$ is uniformly bounded in $L_2(0,T;H)$ and thus, up to subsequences, $\uah' \wto \hat w$ for some $\hat w \in L_2(0,T;H)$. We next show that $\hat w$ is the weak derivative of ${\hat u}$. For this, we need to verify that
\[
  \int_0^T ( \hat w(t), v)\, \phi(t)\,\dr t + \int_0^T ({\hat u}(t), v)\,\phi'(t)\,\dr t = 0
\]
for arbitrary $v \in V$ and $\phi \in C_0^\infty (0,T)$. 
Adding and subtracting the weak derivative of $\uah$, we get
\begin{multline*}
  \int_0^T ( \hat w(t), v )\,\phi(t)\,\dr t + \int_0^T ({\hat u}(t), v)\,\phi'(t)\,\dr t = \\
  {}={} \int_0^T ( \hat w(t) - \uah'(t), v) \,\phi(t)\,\dr t + \int_0^T ({\hat u}(t) - \uah(t), v)\,\phi'(t)\,\dr t.
\end{multline*}
Since $\uah \wto {\hat u}$ and $\uah' \wto \hat w$ in $L_2(0,T; V)$ and $L_2(0,T; H)$, respectively, and since $v\phi$, $v\phi' \in L_2(0,T; V)$, the right hand side converges to zero. Thus, $\hat w = {\hat u}'$.

The strong convergence in $L_2(0,T; H)$ of $\uah$ follows from the theorem of Lions and Aubin \cite[Prop.~III.1.3]{Showalter97}. It states that when $V$ is compactly embedded in $H$, then the space $W_2^1(0,T; V, H)$ is compactly embedded in $L_2(0,T;H)$. Thereby, the weak convergence of $\uah$ and $\uah'$ that we just have proven implies the strong convergence $\uah \to {\hat u}$ in $L_2(0,T;H)$. This together with \eqref{eq:Uh-Vh-equal} directly proves that also $\uch \to {\hat u}$ in $L_2(0,T;H)$. By \eqref{eq:boundedness in V} and lower semicontinuity of the $L_\infty(0,T;V)$-norm with respect to weak convergence in $L_2(0,T;V)$, we even obtain ${\hat u} \in L_\infty(0,T;V)$.

It remains to show that ${\hat u}(t) \in \overline{\Mc}^w$ for almost all $t\in[0,T]$. Recall that $h = T/N$, and let $t_i^{(N)} = ih$, $i = 0,\dots,N$. To any fixed $t \in [0,T]$ we associate a sequence $(t_{j_N}^{(N)})_{N=N_0}^\infty \subset [0,T]$ such that $t_{j_N}^{(N)} \to t$ as $N \to \infty$. We can take $t_{j_N}^{(N)}$ as the largest possible $t_i^{(N)} \le t$, which implies $0 \le t - t_{j_N}^{(N)} \le h$. 
If we can show that for almost all $t \in [0,T]$ a subsequence of $(\uah(t_{j_N}^{(N)})) \subseteq \overline{\Mc}^w$ converges weakly in $H$ to ${\hat u}(t)$, we then get that ${\hat u}(t) \in \overline{\Mc}^w$ for such~$t$. 
We will even show that there exists a strongly convergent subsequence based on the inequality
  \[
    {\bigl\|\uah(t_{j_N}^{(N)}) - {\hat u}(t)\bigr\|}_H \le {\bigl\|\uah(t_{j_N}^{(N)}) - \uah(t)\bigr\|}_H + {\bigl\|\uah(t) - {\hat u}(t)\bigr\|}_H.
  \]
For the second term on the right hand side, since $\uah \to {\hat u}$ in $L_2(0,T; H)$, and possibly passing to a subsequence, we have $\uah(t) \to {\hat u}(t)$ in $H$ for almost all $t$. In order to show that the first term of the right hand side vanishes in the limit we recall that, by construction, $\uah$ is linear on the interval $[t_{j_N}^{(N)}, t_{j_N+1}^{(N)}]$ that contains the given $t$. Therefore, using \eqref{eq:energy-diff},
  \begin{align*}
    {\bigl \|\uah(t_{j_N}^{(N)}) - \uah(t)\bigr\|}_H &\le {\bigl\|\uah(t_{j_N}^{(N)}) - \uah(t_{j_N+1}^{(N)})\bigr\|}_H
     \leq \sqrt{ C_2 h \Big({\|u_0\|}_V^2 +  \norm{f}_{L_2(0,T; H)}^2\Big)},
  \end{align*}
  which vanishes in the limit.
  This shows $\uah(t_{j_N}^{(N)}) \to \hat u(t)$ strongly in $H$ for almost all $t$.

Finally, we show that ${\hat u}(0) = u_0$. By construction, $\uah(0) = u_0$. Choosing $v \in C^\infty(0,T;V)$ such that $v(T) = 0$ and applying integration by parts gives
\[
  \int_0^T \langle \uah'(t),v(t)\rangle\,\dr t + \int_0^T\langle \uah(t),v'(t)\rangle\,\dr t = -(\uah(0),v(0)) = -(u_0,v(0)).
\]
In the limit $h \to 0$,
\begin{multline*}
  -(u_0,v(0)) = \int_0^T \langle \uah'(t),v(t)\rangle\,\dr t + \int_0^T\langle \uah(t),v'(t)\rangle\,\dr t \to \\
   \to \int_0^T \langle {\hat u}'(t),v(t)\rangle\,\dr t + \int_0^T\langle {\hat u}(t),v'(t)\rangle\,\dr t = -({\hat u}(0),v(0)),
\end{multline*}
and as $(u_0,v(0))$ is independent of $h$, $(u_0 - {\hat u}(0),v(0)) = 0$ for all $v(0) \in V$.

  This concludes the proof of Theorem~\ref{thm: main theorem}(a). \hfill \qedsymbol

\subsection{Proof of Theorem~\ref{thm: main theorem}(b)}\label{sec: proof part b}

Our goal is to show that there exists a constant $c>0$ such that for all  $\theta \in (0,1)$ and
\[
T_\theta = \min \left\{  \left(\frac{\theta \sigma}{c} \right)^2 , T \right\},
\]
the limiting function ${\hat u}(t)$ solves Problem~\ref{problem 1} for almost all $t \in [0, T_\theta]$. Since Problem~\ref{problem 1} is formulated on $\Mc \cap V$, we first in particular need to ensure that ${\hat u}(t) \in \mathcal{M}$ for almost all~$t$. We do this by showing next that the $\uch(t)$ keep a positive distance in $H$-norm to $\overline{\Mc}^w \setminus \Mc$. For fixed $h = T/N$ the estimate~\eqref{eq:energy-diff} in Lemma~\ref{thm:energy-discr} yields for every integer $j \le N$ that
\begin{align*}
\| u_j - u_0 \|_H \le \sum_{i=1}^j \| u_i - u_{i-1} \|_H &\le \sqrt{j} \left( \sum_{i=1}^j \| u_i - u_{i-1} \|_H^2 \right)^{1/2}\\
&= \sqrt{t_j} \left( \sum_{i=1}^j \frac{\| u_i - u_{i-1} \|_H^2}{h} \right)^{1/2}\\
&\le \sqrt{t_j}\, c,
\end{align*}
where $c$ is the right hand side of~\eqref{eq:energy-diff}. Using this $c$ in the definition of $T_\theta$, we have ensured
\[
\| u_j - u_0 \|_H \le \theta \sigma \quad \text{for all $t_j \le T_\theta \leq  \frac{\theta^2 \sigma^2}{c^2}$.}
\]
Hence, by construction, since $\uch$ is the piecewise constant interpolant,
\[
\| \uch(t) - u_0 \|_H \le \theta \sigma \quad \text{for all $t \le T_\theta$.}
\]
Recalling that $\sigma$ is the distance of $u_0 \in \Mc$ to $\overline{\Mc}^w \setminus \Mc$, this shows that 
\begin{equation}\label{eq: positive distance to boundary}
\uch(t) \in \Mc' \coloneqq \{ u \in \Mc \colon \| u - u_0 \|_H \le \theta \sigma \} \quad \text{for all $t \le T_\theta$.}
\end{equation}
Indeed, the set $\Mc'$ coincides with the set $\{ u \in \overline{\Mc}^w \colon \| u - u_0 \|_H \le \theta \sigma \}$, which shows that it is a weakly compact subset of $\Mc$. 
Since, up to subsequences, $\uch(t) \to {\hat u}(t)$ strongly in $H$ for almost all $t$ (by part (a)), we get
${\hat u}(t) \in \Mc' \subset \Mc$ for almost all $t \in [0,T_\theta]$.
Since ${\hat u} \in C(0,T; H) \subset W^1_2(0,T; V,H)$, we obtain 
\[
 {\hat u}(t) \in \Mc\quad \text{for all $t \in [0,T_\theta]$.} 
\]

Note next that~\eqref{eq: positive distance to boundary} holds independently of $h$.
The main assumptions ~\textbf{A2} and ~\textbf{A3} therefore provide us with positive constants $\eta$, $\gamma$, and $\kappa$ such that
\begin{equation*}\label{A2est}
\| A_2(t) \uch(t) \|_H \le \gamma \| \uch(t) \|_V^\eta
\end{equation*}
and
\begin{equation}\label{Pest}
\| P_{{\hat u}(t)} - P_{\uch(t)} \|_{H \to H} \le \kappa \| {\hat u}(t) - \uch(t) \|_H 
\end{equation}
whenever $\norm{{\hat u}(t)-\uch(t)}_H\leq\epsilon(\Mc')$, for all $h$ and almost all $t \in [0,T_\theta]$. These are the crucial estimates in order to show that ${\hat u}(t)$ solves Problem~\ref{problem 1} for all such $t$ in the remainder of this proof.

Using the piecewise constant interpolant
\[
  F_h(t) = f_i, \quad t_{i-1} < t \le t_i,\ i = 1, \dots,N,
\]
the Galerkin-type condition~\eqref{eq:euler-galerkin} can be written as
\begin{equation}\label{eq: euler-galerkin 2}
  \langle \uah'(t),v\rangle + a_h(\uch(t),v;t) = \langle F_h(t),v\rangle \quad \text{for all } v \in T_{\uch(t)}\Mc \cap V,
  \end{equation}
where $a_h(\cdot,\cdot)$ is the piecewise constant in time interpolant of $a(\cdot,\cdot; t)$. By Lemma~\ref{mincondition}, this holds as long as $\uch(t) \in \Mc$, which by our above considerations is ensured for all $t \in [0,T_\theta]$. 
  For these $t$, we define the spaces
  \begin{equation*}
    \Vc(t) = T_{{\hat u}(t)}\Mc \cap V,
    \quad     \Vc_h(t) = T_{\uch(t)}\Mc \cap V.
  \end{equation*}
We need to show that
  \begin{equation}\label{aim}
    \langle \Lc(t;{\hat u}),v \rangle \coloneqq \langle {\hat u}'(t),v\rangle + a({\hat u}(t),v; t) - \langle f(t),v\rangle = 0, \quad v \in \Vc(t),
  \end{equation}
for almost all $t\in[0,T_\theta]$, and also that ${\hat u}(0) = u_0$. 
Consider the related expression
\beq
  \label{eq:appr-variational}
  \langle \Lc_h(t;\uah,\uch),v \rangle \coloneqq \langle \uah'(t),v\rangle + a_h(\uch(t),v; t) - \langle F_h(t),v\rangle
\eeq
for an arbitrary $v\in\Vc(t)$. As $v$ is in the tangent space at ${\hat u}(t)$, and not at $\uch(t)$, this expression in general does not equal zero exactly. With test functions in the correct tangent space, however, we do recover \eqref{eq:euler-galerkin}, that is, we have
\begin{equation}\label{discrgalerkin}
   \langle \Lc_h(t;\uah,\uch),v_h \rangle = 0,\quad  v_h \in \Vc_h(t).
\end{equation}

Our first goal is to show, term by term, that for any $w \in L_2(0,T_\theta; V)$ satisfying $w(t) \in \Vc(t)$ for almost all $t$, we have
\begin{equation}\label{conv1}
  \int_0^{T_\theta} (\Lc_h(t;\uah,\uch),w(t))\,\dr t \to \int_0^{T_\theta} (\Lc(t;{\hat u}),w(t))\,\dr t\,.
\end{equation}
as $h \to 0$. For the first term in the right hand side of \eqref{eq:appr-variational}, we immediately obtain
\[
  \int_0^{T_\theta} \langle \uah'(t),w(t)\rangle\,\dr t \to \int_0^{T_\theta} \langle {\hat u}'(t),w(t)\rangle\,\dr t.
\]
Regarding the second term, the bilinear form $a_h(\cdot,\cdot; t)$ defines an operator $A_h(t):V\to V^*$,
\[
  a_h(\uch(t),w(t);t) = \langle A_h(t)\uch(t),w(t)\rangle,
\]
  and since $a_h(\cdot,\cdot; t)$ is symmetric,
\[
  a_h(\uch(t),w(t); t) = a_h(w(t), \uch(t); t) = \langle A_h(t)w(t), \uch(t)\rangle.
  \]
We then get
\begin{multline}
  \int_0^{T_\theta} a_h(\uch(t),w(t);t)\,\dr t = \int_0^{T_\theta} \langle A_h(t)w(t), \uch(t)\rangle\,\dr t = \\
  {}={} \int_0^{T_\theta} \langle A(t)w(t), \uch(t)\rangle\,\dr t + \int_0^{T_\theta} \langle (A_h(t)-A(t))w(t), \uch(t)\rangle\,\dr t. \label{eq:A-int-Lip}
\end{multline}
We have $Aw, A_h w \in L_2(0,T; V^*)$, and hence
\begin{align*}
  &\int_0^{T_\theta} \langle A(t)w(t), \uch(t)\rangle\,\dr t \to \int_0^{T_\theta} \langle A(t)w(t), {\hat u}(t)\rangle\,\dr t = \int_0^{T_\theta} a({\hat u}(t),w(t); t)\,\dr t 
\end{align*}
as $h \to 0$. The second integral in \eqref{eq:A-int-Lip} vanishes in the limit, since
\begin{multline*}
  \Big|\int_0^{T_\theta} \langle (A_h(t)-A(t))w(t), \uch(t)\rangle\,\dr t\Big| \le \int_0^{T_\theta} |\langle (A_h(t)-A(t))w(t), \uch(t)\rangle|\,\dr t \le \\
  {}\le{} \int_0^{T_\theta} hL\beta \|w(t)\|_V \|\uch(t)\|_V\,\dr t \le hL\beta \|w\|_{L_2(0,T_\theta;V)}\|\uch\|_{L_2(0,T_\theta;V)} \to 0.
\end{multline*}
We thus have shown
\[
  \int_0^{T_\theta} a_h(\uch(t),w(t);t)\,\dr t \to \int_0^{T_\theta} a({\hat u}(t),w(t); t)\,\dr t.
\]
Finally, as $F_h$ is a piecewise constant interpolant of a given function $f \in L_2(0,T_\theta;H)$, we have $F_h \to f$ strongly in $L_2(0,T_\theta;H)$, and altogether we obtain~\eqref{conv1}. 

We next show that for all $w$ as above,
\begin{equation}\label{conv2}
  \int_0^{T_\theta} \langle \Lc_h(t;\uah,\uch),w(t)\rangle \,\dr t \to 0
\end{equation}
as $h \to 0$. By \eqref{discrgalerkin}, for $v \in \Vc(t)$ and $v_h \in \Vc_h(t)$, it holds that
\[
  \langle \Lc_h(t;\uah,\uch),v\rangle
  = \langle \Lc_h(t;\uah,\uch), v - v_h\rangle.
\]
We choose $v_h = P_{\uch(t)}v$. Note that $v = P_{{\hat u}(t)}v$ for $v\in\Vc(t)$ for almost all $t \in [0,T_\theta]$. Thus
\begin{equation*}\label{Lhprod}\begin{aligned}
  \langle \Lc_h(t;\uah,\uch), v\rangle  &= \langle \Lc_h(t;\uah,\uch), (P_{{\hat u}(t)} - P_{\uch(t)})v\rangle  \\
  &=  \langle \uah'(t) + A_h(t)\uch(t) - F_h(t), (P_{{\hat u}(t)} - P_{\uch(t)})v\rangle \\
  &= \langle \uah'(t) + A_h(t)\uch(t) - F_h(t), (I - P_{\uch(t)})(P_{{\hat u}(t)} - P_{\uch(t)})v \rangle,
\end{aligned}\end{equation*}
where the last equality holds due to~\textbf{A3}(b) and~\eqref{eq: euler-galerkin 2}. In light of $\textbf{A4}$ we hence have the estimate
\[
\abs{\langle \Lc_h(t;\uah,\uch), v\rangle } \le \bigl( \norm{\uah'(t)}_H + \gamma \norm{\uch(t)}_V^\eta + \norm{F_h(t)}_H \bigr) \norm{(P_{{\hat u}(t)} - P_{\uch(t)})v}_H   
\]
for almost all $t \in [0,T_\theta]$.
By the curvature bound~\eqref{Pest},
\begin{equation}\label{curvaturebound with D}
    \norm{(P_{{\hat u}(t)} - P_{\uch(t)})v}_H \leq D_{t,h} \norm{v}_H ,\quad D_{t,h}\coloneqq  \kappa \norm{{\hat u}(t)-\uch(t)}_H,
\end{equation}
once $\norm{{\hat u}(t)-\uch(t)}_H\leq\epsilon(\Mc')$. 
Since $\uch$ converges strongly in $L_2(0,T_\theta; H)$, up to passing to a subsequence we have $\norm{\hat u(t) - \uch(t)}_H \to 0$ for almost all $t \in [0,T_\theta]$. Hence for almost all $t$, \eqref{curvaturebound with D} applies for sufficiently small $h$.
We can therefore conclude that
\[
 \langle \Lc_h(t;\uah,\uch), v\rangle  \to 0 \quad \text{for almost all $t \in [0,T_\theta]$}
\]
when $v \in \Vc(t)$. Specifically taking $v = w(t)$ in the above considerations shows 
\[
 | \langle \Lc(t;\uah,\uch),w(t) \rangle | \leq D_{t,h}( \norm{\uah'(t)}_H + \gamma \norm{\uch(t)}_V^\eta + \norm{F_h(t)}_H) \norm{w(t)}_H,
\]
where $D_{t,h}$ and $\norm{\uch(t)}_V$ are bounded uniformly in $h$ for almost all $t\in [0,T_\theta]$. Hence the right hand side provides an integrable upper bound, and by the dominated convergence theorem we arrive at~\eqref{conv2}.

Combined with \eqref{conv1}, we conclude
\begin{equation}\label{intzero}
     \int_0^{T_\theta} \langle \Lc(t;{\hat u}),w(t)\rangle \,\dr t = 0  \quad \text{for all $w \in L_2(0,T_\theta; V)$, $w(t) \in \Vc(t)$.}
\end{equation}
This shows \eqref{aim} as desired, since in the opposite case there would be a subset $S \subseteq [0,T_\theta]$ of positive measure such that for all $t \in S$ we have $\langle \Lc(t;{\hat u}),v\rangle \neq 0$ for some $v\in \Vc(t)$. By appropriately scaling these $v$, we can then choose $w(t) \in \Vc(t)$ such that $\norm{w}_{L_\infty(0,T_\theta;V)} < \infty$ and $\langle \Lc(t;{\hat u}),w(t) \rangle >0$ (since $\Vc(t)$ is a linear space). Hence the left hand side of \eqref{intzero} would be positive.

This concludes the proof of Theorem~\ref{thm: main theorem}(b). \hfill \qedsymbol

\section{Numerical Methods for Low-Rank Matrix Manifolds}
\label{sec:numerical methods}

In this section, we comment on how the basic variational time stepping scheme \eqref{eq:euler-opt}, which we have used to prove the existence of a solution to Problem~\ref{problem 1}, is connected to numerical methods for actually computing the low-rank evolution.
A first strategy for solving the general dynamical low-rank problem \eqref{eq:dynamicallowrank}, as used in \cite{Koch07}, is to extract from \eqref{eq:dynamicallowrank} equations for the components $U$, $S$, $V$ in a factorization $Y(t) = U(t)\,S(t)\,V(t)^T$, where $U(t), V(t)$ have orthonormal columns, and $S(t) \in \R^{r\times r}$ is invertible, but not necessarily diagonal.
These equations can then be solved by standard time stepping schemes. Note that in this section we reserve the notation $U$ and $V$ for matrix factors in order to adopt to standard notation in linear algebra and to avoid too many indices.

An alternative scheme was proposed in \cite{Lubich14}. 
For notational convenience, the above matrix factorization $Y = USV^T$ can be written in vectorized form as
\[
y\coloneqq\vct(Y) = \vct(USV^T) = (V \otimes U)s, \quad s = \vct(S).
\]
With $P_U \coloneqq U U^T$ and $P_V \coloneqq V V^T$, \eqref{eq:dynamicallowrank} is then rewritten as
\begin{equation}\label{eq: projector dynamical low rank}
    y'(t) = (I \otimes P_{U(t)}) F\bigl(t,y(t)\bigr) - (P_{V(t)} \otimes P_{U(t)}) F\bigl(t,y(t)\bigr) + (P_{V(t)} \otimes I) F\bigl(t,y(t)\bigr),
\end{equation}
based on the formula~\eqref{eq: tangent space projector} for the tangent space projector. A time stepping scheme is then obtained by applying an operator splitting, that is, by integrating the three terms on the right hand side of~\eqref{eq: projector dynamical low rank} in time in the given order. 
As shown in \cite{Lubich14}, the resulting method has very interesting characteristics; for instance, the splitting is exact if $F\bigl(t,Y(t)\bigr)$ is in the tangent space at $Y(t)$ on the considered time interval.

Let $( \varphi_n )_{n \in \mathcal{I}}$ with $\mathcal{I}\subseteq \N$ be an orthonormal system in $L_2(0,1)$ with all $\varphi_n$ sufficiently regular, and let 
\[
  \tilde A(t) \coloneqq \Bigl( \bigl\langle A(t) (\varphi_{j_1}\otimes\varphi_{j_2}) , (\varphi_{i_1}\otimes \varphi_{i_2}) \bigr\rangle \Bigr)_{i,j\in \mathcal{I}^2},\quad 
  \tilde f(t) \coloneqq \Bigl( \bigl\langle f(t),  (\varphi_{i_1}\otimes \varphi_{i_2})  \bigr\rangle \Bigr)_{i\in \mathcal{I}^2},
\]
so that the initial value problem
\[
   \tilde u'(t) + \tilde A(t) \tilde u(t) = \tilde f(t)
\]
for $\tilde u(t) \in \ell_2(\mathcal{I})$ is a Galerkin semidiscretization or, if $( \varphi_n)$ is an orthonormal basis, the basis representation of~\eqref{eq:parabolic}.
The splitting scheme from \cite{Lubich14} for this problem, for a time step of length $h$ with low-rank initial data $u_0 = (V_0 \otimes U_0)s_0$, formally reads as follows:

\medskip

\begin{subequations}\label{eq:splitting}
\begin{itemize}[leftmargin=1em,itemsep=1em]
\item[--] Determine $U_1 = U(h)$, $s^+_1 = s(h)$ as solution of 
  \begin{equation}\label{eq:splitting1}
    \ddop{t}\bigl(V_0 \otimes U(t)\bigr)s(t) = -\bigl(V_0V_0^T \otimes I\bigr)\tilde A(t)\bigl(V_0 \otimes U(t)\bigr)s(t) + \bigl(V_0V_0^T \otimes I\bigr)\tilde f,
  \end{equation}
  such that $U(t)$ has orthonormal columns for each $t$, with $U(0) = U_0$, $s(0) = s_0$.
\item[--] Determine $s^+_0 = s(h)$ as solution of
\begin{equation}\label{eq:splitting2}
    \ddop{t}(V_0 \otimes U_1)s(t) = (V_0V_0^T \otimes U_1U_1^T)\tilde A(t)(V_0 \otimes U_1)s(t) - (V_0V_0^T\otimes U_1U_1^T)\tilde f, 
  \end{equation}
  with $s(0) =  s^+_1$.
\item[--] Determine $V_1 = V(h)$, $s_1 = s(h)$ as solution of 
 \begin{equation}\label{eq:splitting3}
    \ddop{t}\bigl(V(t) \otimes U_1 \bigr)s(t) = - \bigl(I \otimes U_1U_1^T \bigr)\tilde A(t) \bigl(V(t) \otimes U_1\bigr)s(t) + \bigl(I\otimes U_1U_1^T \bigr)\tilde f
  \end{equation}
  such that $V(t)$ has orthonormal columns for each $t$,
  with $V(0) = V_0$, $s(0) =  s^+_0$.
\end{itemize}
\end{subequations}

\medskip

Altogether, this yields $u_1 = (V_1\otimes U_1) s_1$. Note that the orthogonality requirements on $U$ and $V$ in the first and third step can be enforced by solving for $K(t) =(I \otimes U(t)) s(t)$ and $L(t) =( V(t) \otimes I) s(t)$, respectively, and then factorizing the results (e.g., using QR decomposition) at time $h$. While \eqref{eq:splitting1} and \eqref{eq:splitting3} are parabolic problems projected to a subspace, \eqref{eq:splitting2} is a \emph{backwards} parabolic problem (projected to a finite-dimensional space) that can in principle be arbitrarily ill-conditioned. As we show next, a suitable combination of time discretizations of the three steps in \eqref{eq:splitting} can mitigate this issue. 

Solving \eqref{eq:splitting1} and \eqref{eq:splitting3} by the backward Euler method and \eqref{eq:splitting2} by the forward Euler method, we obtain the following numerical scheme approximating \eqref{eq:splitting}:

\medskip

\begin{subequations}\label{eq:euler}
\begin{itemize}[leftmargin=1em,itemsep=1em]
\item[--] Backwards Euler step for \eqref{eq:splitting1}: Solve
\begin{equation}\label{eq:euler1}
  (V_0 \otimes I ) k_1 + h( V_0 V_0^T \otimes I)\tilde A_1 (V_0 \otimes I) k_1  = (V_0 \otimes U_0)s_0 + h(V_0 V_0^T\otimes I)\tilde f_1,
\end{equation}
for $k_1$, factorize $k_1 = (I \otimes U_1) s^+_1$ such that $U_1$ has orthonormal columns.

\item[--] Forward Euler step for the backwards problem \eqref{eq:splitting2}: update
\begin{equation}\label{eq:euler2}
  s^+_0=  s^+_1 + h(V_0^T \otimes U_1^T)\tilde A_1 (V_0 \otimes U_1) s^+_1 - h(V_0^T\otimes U_1^T)\tilde f_1.
\end{equation}

\item[--] Backwards Euler step for \eqref{eq:splitting3}: Solve
\begin{equation}\label{eq:euler3}
  (I \otimes U_1 )\ell_1 + h(I \otimes U_1 U_1^T)\tilde A_1 (I \otimes  U_1 U_1)\ell_1 = (V_0 \otimes  U_1 ) s^+_0 + h(I\otimes U_1 U_1^T)\tilde f_1
\end{equation}
for $\ell_1$, factorize $\ell_1 = (V_1 \otimes I) s_1$ such that $V_1$ has orthonormal columns.

\end{itemize}
\end{subequations}

\medskip

Multiplying \eqref{eq:euler1} by $(I \otimes U_1^T)$ and substituting into \eqref{eq:euler2}, we find that \eqref{eq:euler2} can be rewritten as
  \begin{equation}\label{eq:euler-proj2}
     s^+_0 = (I \otimes U_1^T U_0) s_0.
  \end{equation}
Thus with this combination of discretizations, the backwards step \eqref{eq:splitting2} amounts to a projection in \eqref{eq:euler2}. Let us mention that a different method for avoiding a backward time step, but with similar properties as~\eqref{eq: projector dynamical low rank}, has recently been proposed in~\cite{Ceruti20}.

As we now show, the time discretization \eqref{eq:euler} is in fact closely related to the so called alternating least squares (ALS) low-rank minimization method applied to the discretized version of our variational time stepping scheme \eqref{eq:euler-opt} with starting value $u_0$ at time $t_0$, that is, to the problem
\begin{equation}\label{eq:euler-opt-discr}
 u_1 =  \argmin_{\operatorname{rank}(y) = r} \left\{  \frac{1}{2(t_{1} - t_0)}\|y-u_0\|_{\ell_2(\mathcal{I})}^2 + \frac{1}{2} \langle \tilde A_1 y, y \rangle  - \langle \tilde f_{1}, y\rangle \right\}  
  \end{equation}
where $\tilde A_i = \tilde A(t_i)$ and $\tilde f_i = \tilde f(t_i)$.
 The ALS method for this minimization problem consists in the following iteration:
 
\medskip

\begin{subequations}\label{eq:ALSsweep}
\noindent Given $y_0 = (V_0\otimes U_0)s_0$, repeat for $j = 0,1,2,\dots$:
\begin{itemize}[leftmargin=1em]
\item[--] Solve
  \begin{equation}\label{eq:ALS1}
    (I + h(V_j^T \otimes I) \tilde A_{j+1} (V_i \otimes I)) k_{j+1} = (I \otimes U_j)s_j + h(V_j^T\otimes I)\tilde f_{j+1}
  \end{equation}
  for $k_{j+1}$, then factorize $k_{j+1} = (I \otimes U_{j+1}){s}^+_{j+1}$.
  
  \smallskip
\item Solve
  \begin{equation}\label{eq:ALS2}
    (I + h(I \otimes U_{j+1}^T) \tilde A_{j+1} (I \otimes U_{j+1})) \ell_{j+1} = (V_j \otimes U_{j+1}^T U_j)s_j+ h(I\otimes U_{j+1}^T)\tilde f_{j+1}
  \end{equation}
  for $\ell_{j+1}$, factorize $\ell_{j+1} =  (V_{j+1}\otimes I)s_{j+1}$,
  and obtain $y_{j+1} = (V_{j+1} \otimes U_{j+1}) s_{j+1}$.
\end{itemize}
\end{subequations}

\medskip

\begin{proposition}
Let starting values $(V_0\otimes U_0)s_0$ be given. Then for the result $u_1 = (V_1 \otimes U_1)s_1$ of \eqref{eq:euler} and the result $y_1$ of \eqref{eq:ALSsweep} after a single step, we have $u_1 = y_1$.
\end{proposition}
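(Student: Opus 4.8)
The plan is to verify that the three steps of the Euler splitting \eqref{eq:euler} collapse, one by one, onto the two substeps of the ALS sweep \eqref{eq:ALSsweep} taken with $j=0$, so that both schemes produce the same factors $U_1$, $V_1$ and the same core $s_1$, and hence the same reconstruction $(V_1\otimes U_1)s_1$. The mechanism throughout is \emph{contraction}: although \eqref{eq:euler1} and \eqref{eq:euler3} are written in the full space $\ell_2(\mathcal{I}^2)$, both of their sides lie in the ranges of $(V_0\otimes I)$ and $(I\otimes U_1)$, respectively, because $V_0$, $U_0$ and $U_1$ have orthonormal columns. Left-multiplying by the corresponding adjoints $(V_0^T\otimes I)$ and $(I\otimes U_1^T)$ and collapsing each factor $V_0^TV_0$, $U_1^TU_1$ to the identity turns these full equations into reduced linear systems, and this contraction is an equivalence since the relevant Kronecker factors have orthonormal columns.

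First I would treat the K-step. Applying $(V_0^T\otimes I)$ to \eqref{eq:euler1} and using $V_0^TV_0=I$ yields
\[
  \bigl(I + h(V_0^T\otimes I)\tilde A_1(V_0\otimes I)\bigr)k_1 = (I\otimes U_0)s_0 + h(V_0^T\otimes I)\tilde f_1,
\]
which is exactly \eqref{eq:ALS1} for $j=0$. Hence the two schemes solve the same system for $k_1$ and, applying the same factorization $k_1=(I\otimes U_1)s^+_1$, obtain identical $U_1$ and $s^+_1$.

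Next I would invoke the projection identity \eqref{eq:euler-proj2}, $s^+_0=(I\otimes U_1^TU_0)s_0$, which the text already derives by testing \eqref{eq:euler1} with $(I\otimes U_1^T)$ and substituting into \eqref{eq:euler2}; this is the conceptual crux, as it replaces the forward-Euler update of the \emph{backward} parabolic S-step by a bare orthogonal projection. With $U_1$ and $s^+_0$ thus fixed, I would contract the L-step \eqref{eq:euler3} by $(I\otimes U_1^T)$, using $U_1^TU_1=I$, to obtain
\[
  \bigl(I + h(I\otimes U_1^T)\tilde A_1(I\otimes U_1)\bigr)\ell_1 = (V_0\otimes I)s^+_0 + h(I\otimes U_1^T)\tilde f_1.
\]
Substituting \eqref{eq:euler-proj2} into the first term on the right, $(V_0\otimes I)(I\otimes U_1^TU_0)s_0=(V_0\otimes U_1^TU_0)s_0$, reproduces exactly the right-hand side of \eqref{eq:ALS2} for $j=0$, while the left-hand operators already coincide. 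Therefore the two schemes solve the same system for $\ell_1$ and, factorizing $\ell_1=(V_1\otimes I)s_1$, produce identical $V_1$ and $s_1$; consequently $u_1=(V_1\otimes U_1)s_1=y_1$.

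The routine part is the Kronecker bookkeeping---tracking which variables live in the full versus the reduced space and applying $\vct(AXB)=(B^T\otimes A)\vct(X)$ consistently. The point needing genuine care, and the main obstacle, is the factorization gauge: the QR-type factorizations of $k_1$ and $\ell_1$ are unique only up to an orthogonal transformation of the columns of $U_1$ (resp.\ $V_1$). I would address this by noting that the reconstructed iterate is invariant under such a gauge, since a change $U_1\mapsto U_1Q$ is compensated in $s^+_1$, in $s^+_0$ via \eqref{eq:euler-proj2}, and in the L-step operator so that $(I\otimes U_1)\ell_1=(V_1\otimes U_1)s_1$ is unchanged. Thus it suffices that both schemes give rise to the same reduced systems, which the contraction establishes, and the equality $u_1=y_1$ of the final low-rank iterates then follows regardless of the particular factorization routine used.
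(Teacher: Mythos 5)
Your proof is correct and follows essentially the same route as the paper's: contracting \eqref{eq:euler1} by $(V_0^T\otimes I)$ (equivalently, factoring out $(V_0\otimes I)$) to recover \eqref{eq:ALS1}, then contracting \eqref{eq:euler3} by $(I\otimes U_1^T)$ and inserting the projection identity \eqref{eq:euler-proj2} to recover \eqref{eq:ALS2}. Your additional remark on invariance of the reconstruction under the orthogonal gauge freedom in the factorizations of $k_1$ and $\ell_1$ is a valid point that the paper's proof leaves implicit.
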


\begin{proof}
Factoring out $(V_0 \otimes I)$ in~\eqref{eq:euler1} immediately yields~\eqref{eq:ALS1}. 
Similarly factoring out $(I \otimes U_1)$ in ~\eqref{eq:euler3} and using the projection form~\eqref{eq:euler-proj2} of the second step gives~\eqref{eq:ALS2}.
\end{proof}

\section{Outlook}\label{sec:outlook}

We expect that the obtained existence and uniqueness result is applicable to dynamical low-rank tensor approximations~\cite{Koch10,Lubich13,Lubich15} of higher-dimensional parabolic problems in suitable low-rank formats. 
Beyond the intrinsic interest of parabolic evolution equations under low-rank constraints (or on more general manifolds), the dynamical low-rank approach can also be of interest as an algorithmic component in approximation schemes involving rank adaptivity. 
For instance, with constant right-hand side $f$, performing the low-rank evolution for $u' + Au = f$ to sufficiently large times yields an approximation of $A^{-1} f$. The approach considered here can thus be used in the construction of preconditioners for low-rank approximations of elliptic problems with strongly anisotropic diffusion, where existing methods for Laplacian-type operators are less efficient. A related further question is under what conditions the evolution on $\Mc_r$ approaches the one on the full space $V$ as $r\to \infty$. While results of this type are available for finite-dimensional problems \cite{Feppon18} and for Schr\"odinger-type evolution equations \cite{Conte10}, in our present setting this issue remains open.

\appendix

\section{The fixed-rank manifold in Hilbert space}

\subsection{Local manifold structure}\label{app: local manifold structure}

The goal of this section is to characterize the set $\Mr$ defined in~\eqref{eq: low-rank manifold} locally as an embedded submanifold of $L_2(\Omega)$ by using the submersion theorem. In order to generalize the known arguments from the finite matrix case~\cite[Example~8.14]{Lee2003} it is convenient to consider instead of $\Mr$ the set
\[
\Mc = \{X \in \ell_2(\N^2) : \rank{X} = r\}
\]
of fixed rank-$r$ \emph{infinite} matrices in the real tensor product Hilbert space $\ell_2(\N^2) = {\ell_2(\N) \otimes \ell_2(\N)}$, endowed with the inner product
\[
\langle X, Y \rangle_{\ell_2(\N^2)} = \sum_{i,j} X_{ij} Y_{ij}.
\]
The space $\ell_2(\N^2)$ is isometrically isomorphic to $L_2(\Omega)$, $\Omega = (0,1)^2$, by means of sequence representations with respect to a fixed tensor product orthonormal basis in $L_2(\Omega)$, and $\mathcal M \subset \ell_2(\N^2)$ corresponds precisely to $\Mr \subset L_2(\Omega)$.

As with finite matrices, we can identify the elements of $\ell_2(\N^2)$ as compact (Hilbert-Schmidt) linear operators on $\ell_2(\N)$. For a fixed $X \in \Mc$ we then have the singular value decomposition
\begin{equation}\label{eq: SVD in ell_2}
  X = \sum_{k=1}^r \sigma_k^{} u_k^1 \otimes u_k^2, \quad \sigma_1 \ge \sigma_2 \ge \dots \ge \sigma_r > 0,\ (u_k^1,u_\ell^1)_{\ell_2}= (u_k^2, u_\ell^2)_{\ell_2} = \delta_{k,\ell}.
\end{equation}
Let $\mathcal U_1 = \spn\{ u_1^1,\dots,u_r^1 \}$ and $\mathcal U_2 = \spn\{ u_1^2,\dots,u_r^2 \}$ denote the $r$-dimensional column and row space of $X$, respectively, and $P_1, P_2$ the corresponding orthogonal projections. We can decompose any $Y \in \ell_2(\N^2)$ into the four mutually orthogonal parts
\[
A = P_1 Y P_2, \quad B = P_1 Y (I - P_2), \quad C = (I - P_1)YP_2, \quad D = (I - P_1)
Y(I- P_2).
\]
For any $Y$ in the open ball $\mathcal O \coloneqq \{ Y \in \ell_2(\N^2) \colon \| X - Y \|_{\ell_2(\N^2)} < \sigma_r \}$ the component $A = P_1 Y P_2$ defines an invertible operator from $\mathcal U_2$ to $\mathcal U_1$, since $P_1 X P_2$ obviously is such an invertible operator and its distance to the singular operators is precisely $\sigma_r$.
Therefore, we can consider the map
\[
g : \mathcal O \to \mathcal U_1^\perp \otimes \mathcal U_2^\perp, \quad g(Y)= D - C A^{-1} B,
\]
for which we have the following result.

\begin{proposition}\label{prop: local submersion}
The map $g$ is a submersion and $g^{-1}(0) = \Mc \cap \mathcal O$.
\end{proposition}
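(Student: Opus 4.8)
The plan is to mirror the standard finite-dimensional argument that realizes the fixed-rank matrices as the zero set of a Schur-complement map, checking that the two genuinely infinite-dimensional points—invertibility of the pivot block and preservation of rank under the factorization—cause no difficulty. Throughout I view each $Y \in \mathcal{O}$ as a bounded (in fact Hilbert--Schmidt) operator and write it in block form with respect to the orthogonal splittings $\ell_2(\N) = \mathcal{U}_1 \oplus \mathcal{U}_1^\perp$ on the ``column'' side and $\mathcal{U}_2 \oplus \mathcal{U}_2^\perp$ on the ``row'' side, so that
\[
Y = \begin{pmatrix} A & B \\ C & D \end{pmatrix},
\]
with $A = P_1 Y P_2$, $B = P_1 Y (I-P_2)$, $C = (I-P_1) Y P_2$, $D = (I-P_1) Y (I-P_2)$. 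The block $A$ is an operator between the $r$-dimensional spaces $\mathcal{U}_2$ and $\mathcal{U}_1$, and as already recorded it is invertible on $\mathcal{O}$ because the distance of the invertible operator $P_1 X P_2$ to the non-invertible ones equals $\sigma_r$.

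For the identity $g^{-1}(0) = \Mc \cap \mathcal{O}$ I would use the block LU factorization
\[
Y = \begin{pmatrix} I & 0 \\ CA^{-1} & I \end{pmatrix} \begin{pmatrix} A & 0 \\ 0 & S \end{pmatrix} \begin{pmatrix} I & A^{-1}B \\ 0 & I \end{pmatrix}, \qquad S \coloneqq D - CA^{-1}B = g(Y).
\]
The two triangular factors are bounded and invertible, since their off-diagonal blocks $CA^{-1}$ and $A^{-1}B$ are bounded (here $A^{-1}$ acts on a finite-dimensional space), and multiplication by bounded bijections preserves the dimension of the range. As the range of the middle block-diagonal operator is $\operatorname{range}(A) \oplus \operatorname{range}(S)$ with the summands lying in the orthogonal subspaces $\mathcal{U}_1$ and $\mathcal{U}_1^\perp$, we obtain $\rank(Y) = \rank(A) + \rank(S) = r + \rank(S)$. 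Hence $\rank(Y) = r$ holds precisely when $S = 0$, that is, when $g(Y) = 0$.

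To show that $g$ is a submersion, I would first note it is smooth on $\mathcal{O}$, since $A,B,C,D$ depend linearly (hence smoothly) on $Y$ and $A \mapsto A^{-1}$ is smooth on the open set of invertible operators between the finite-dimensional spaces $\mathcal{U}_2$ and $\mathcal{U}_1$. For surjectivity of the derivative at an arbitrary $Y \in \mathcal{O}$, the key observation is that the target space $\mathcal{U}_1^\perp \otimes \mathcal{U}_2^\perp$ is already reached by perturbing the $D$-block alone: decomposing an increment as $H = H_A + H_B + H_C + H_D$ by the same four projections and using $\mathrm{d}(A^{-1})[H_A] = -A^{-1}H_A A^{-1}$, one finds
\[
Dg(Y)[H] = H_D - H_C A^{-1} B + C A^{-1} H_A A^{-1} B - C A^{-1} H_B.
\]
Given any $Z \in \mathcal{U}_1^\perp \otimes \mathcal{U}_2^\perp$, the choice $H = Z$ satisfies $H_A = H_B = H_C = 0$ and $H_D = Z$, whence $Dg(Y)[Z] = Z$. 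Thus $Dg(Y)$ is surjective and $g$ is a submersion.

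The only place where I expect real care is needed is the factorization step: one must confirm that $A^{-1}$ exists as a bounded operator (guaranteed on $\mathcal{O}$ by the distance bound above) and that the rank-additivity $\rank(Y) = r + \rank(S)$ survives in the operator setting, for which it suffices that the triangular factors are bounded bijections of $\ell_2(\N^2)$. The submersion part, by contrast, is immediate once one notices that the $D$-block is perturbed independently of the others. Finally, to match the usage elsewhere in the paper one may evaluate $\ker Dg(X)$ at the base point $Y = X$, where $B = C = 0$ and the formula above collapses to $Dg(X)[H] = H_D$; the condition $H_D = (I-P_1)H(I-P_2) = 0$ is exactly the description of $T_X\Mc$ in~\eqref{eq: tangent space}, confirming that the null space of the submersion is the claimed tangent space.
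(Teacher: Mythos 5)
Your proof is correct. The submersion half coincides with the paper's own argument: you compute the identical formula for $Dg(Y)$ via the Neumann expansion of $(A+\Delta_A)^{-1}$, and your observation that an increment $H=Z\in\mathcal U_1^\perp\otimes\mathcal U_2^\perp$ has $H_A=H_B=H_C=0$, so that $Dg(Y)[Z]=Z$, is exactly what the paper's ``obviously surjective'' amounts to (in the Hilbert space setting the kernel of a bounded surjection splits automatically, so this does suffice). Where you genuinely diverge is the identity $g^{-1}(0)=\Mc\cap\mathcal O$. The paper argues through the kernel of $Y$: writing $z=v+w$ with $v\in\mathcal U_2$, $w\in\mathcal U_2^\perp$, it shows $Yz=0$ if and only if $v=-A^{-1}Bw$ and $(D-CA^{-1}B)w=0$; when the Schur complement vanishes the kernel has codimension exactly $r$, giving $\rank(Y)=r$, and when it does not, the paper exhibits an $(r+1)$-dimensional subspace on which $Y$ is injective, giving $\rank(Y)>r$. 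You instead factor $Y=L\,\diag(A,S)\,R$ with block-triangular $L$, $R$ that are bounded bijections of $\ell_2(\N)$, and read off $\rank(Y)=\rank(A)+\rank(S)=r+\rank(S)$ from the orthogonality of $\operatorname{range}(A)=\mathcal U_1$ and $\operatorname{range}(S)\subseteq\mathcal U_1^\perp$. Both are Schur-complement arguments at heart, but yours condenses the paper's two-case kernel analysis into a single rank identity, which is cleaner and more standard; the price, which you correctly flag and settle, is verifying in the operator setting that the triangular factors are bounded bijections (immediate here since $A^{-1}$ acts between the $r$-dimensional spaces $\mathcal U_1$ and $\mathcal U_2$) and that rank is invariant under composition with such maps. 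The paper's route needs no factorization machinery and yields the small byproduct $\ker Y\cap\mathcal U_2=\{0\}$, i.e.\ $\rank(Y)\ge r$ throughout $\mathcal O$. Your closing identification of $\ker Dg(X)$ with the tangent space~\eqref{eq: tangent space} matches the paper's discussion immediately following the proposition.
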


\begin{proof}
The derivative of $g$ at $Y = A + B + C + D \in \mathcal O$ is the linear map
\begin{gather}\label{eq: g'(Y)}
g'(Y) : \ell_2(\N^2) \to \mathcal U_1^\perp \otimes \mathcal U_2^\perp, \\
\Delta_A + \Delta_B + \Delta_C + \Delta_D \mapsto \Delta_D - \Delta_C A^{-1}B + CA^{-1}\Delta_A A^{-1} B - CA^{-1} \Delta_B,
 \end{gather}
 where $\Delta_A, \Delta_B, \Delta_C, \Delta_D$ correspond to perturbations in the blocks $A,B,C,D$, respectively. We have used that $(A + \Delta_A)^{-1} = A^{-1} - A^{-1} \Delta_A A^{-1} + O(\| \Delta_A \|^2_{\ell_2(\N^2)})$. Obviously, $g'(Y)$ is surjective for any $Y \in \mathcal O$ and depends continuously on $Y$. Hence $g$ is a submersion.
 
To show that $g^{-1}(0) = \Mc \cap \mathcal O$ we note that any $z \in \ell_2(\N)$ can be decomposed into $z = v+w$ with $v\in \mathcal U_2$ and $w \in \mathcal U_2^\perp$. For $Y$ as above we then have
\[
Yz = Y(v+w)=Av + Bw + Cv + Dw.
\]
Here $Av + Bw \in \mathcal U_1$ and $Cv + Dw \in \mathcal U_1^\perp$ are orthogonal to each other. This implies that $\mathcal U_2$ intersects the null space of $Y$ only trivially (confirming $\rank(Y) \ge k$). Also it shows that $Y z = 0$ if and only if $v = -A^{-1}B w$ and $(D-CA^{-1}B)w=0$. In particular, if $D - CA^{-1}B = 0$, the null space of $Y$ has co-dimension at most $k$, which yields $\rank(Y) = k$. If, on the other hand, $D-CA^{-1}B \neq 0$ we can find $z$ with $Yz \neq 0$ and $w \neq 0$. Since this $z$ is linearly independent from $\mathcal U_2$, one concludes that $\rank (Y) > k$ in this case. 
\end{proof}

It is a standard consequence of Proposition~\ref{prop: local submersion} that $g^{-1}(0) = \Mc \cap \mathcal O$ is a submanifold of the Hilbert space $\ell_2(\N^2)$, and indeed an infinitely smooth one (since $g$ is infinitely smooth); see~\cite[Thm.~73.C]{Zeidler88}. The tangent space at $X$ is the null space of $g'(X)$. Since $g'(X)$ is just the orthogonal projector $Z \mapsto (I-P_1)Z (I - P_2)$ onto $\mathcal U_1^\perp \otimes \mathcal U_2^\perp$ ($B$ and $C$ in~\eqref{eq: g'(Y)} are zero at $Y=X$), this gives
\[
T_X \Mc = (\mathcal U_1^\perp \otimes \mathcal U_2^\perp)^\perp = \mathcal U_1 \otimes \ell_2(\N) + \ell_2(\N) \otimes \mathcal U_2,
\]
which matches the definition~\eqref{eq: tangent space} of $T_u \Mr \subseteq L_2(\Omega)$. We note that by the generalized inverse function theorem~\cite[Thm.~43.C]{Zeidler85}, there exists a $C^1$-homeomorphism 
from a neighborhood of zero in $T_X \Mc$ to a neighborhood (in the subspace topology) of $X$ in $\Mc$, which is also an immersion, and therefore provides a local embedding of $\Mc$ in $\ell_2(\N^2)$.

\subsection{Curvature and projection bounds in Hilbert space}

We now generalize known curvature bounds for finite-dimensional fixed-rank matrix manifolds to the Hilbert space case. As in the previous subsection we first consider $\Mc = \{X \in \ell_2(\N^2) : \rank(X) = r\}$. 
Interpreting the infinite matrices $X \in \ell_2(\N^2)$ as linear operators on $\ell_2(\N)$, it will be important to also consider their spectral norm
\[
  \|X\|_{\ell_2(\N)\to\ell_2(\N)} = \sup_{\|w\|_{\ell_2(\N)} \le 1} \|Xw\|_{\ell_2(\N)}
  \]
  (it equals $\sigma_1$ in the SVD~\eqref{eq: SVD in ell_2}). Then the inequality
\[
  \|XY\|_{\ell_2(\N^2)} \le \|X\|_{\ell_2(\N)\to\ell_2(\N)} \|Y\|_{\ell_2(\N^2)}
\]
holds for all $X,Y\in \ell_2(\N^2)$.

The proof of the following bounds is adapted from the analogous result for finite matrices~\cite[Lemmas~4.1\,\&\,4.2]{Wei16a}.

\begin{lemma}
  \label{thm:curvature}
   Let $X, \hat X \in \Mc \subset \ell_2(\N^2)$, where $X$ has smallest nonzero singular value $\sigma_r (X) >0$. 
   Then the tangent space projections satisfy the Lipschitz-like bound
  \beq
    \label{eq:curvature}
    \|P_{\hat X}(Z) - P_X(Z)\|_{\ell_2(\N^2)} \le \frac{2 }{\sigma_r(X)} \|\hat X-X\|_{\ell_2(\N)\to\ell_2(\N)} \|Z\|_{\ell_2(\N^2)}
  \eeq
  for all $Z \in \ell_2(\N^2)$,
  and 
    \beq
    \label{eq:curvature2}
    \|(I-P_{\hat X})(X-\hat X)\|_{\ell_2(\N^2)} \le \frac{1}{\sigma_r(X)} \|\hat X-X\|^2_{\ell_2(\N^2)}.
  \eeq
\end{lemma}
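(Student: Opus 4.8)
The plan is to reduce both estimates to a single perturbation bound for the orthogonal projections onto the column and row spaces, and then to exploit the product inequality $\norm{AB}_{\ell_2(\N^2)}\le\norm{A}_{\ell_2(\N)\to\ell_2(\N)}\norm{B}_{\ell_2(\N^2)}$ stated just above. Throughout I interpret the elements of $\ell_2(\N^2)$ as Hilbert--Schmidt operators, write $P_1,P_2$ for the projections onto the column and row spaces $\mathcal U_1,\mathcal U_2$ of $X$ (as in the SVD~\eqref{eq: SVD in ell_2}), and $\hat P_1,\hat P_2$ for those of $\hat X$. The tangent projectors then act by $P_X Z = P_1 Z + Z P_2 - P_1 Z P_2$ and $(I-P_{\hat X})Z = (I-\hat P_1)Z(I-\hat P_2)$, in accordance with~\eqref{eq: tangent space projector} and the description of $T_X\Mc$ in Appendix~\ref{app: local manifold structure}.

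The central estimate I would establish first is
\[
\norm{(I-\hat P_1)P_1}_{\ell_2(\N)\to\ell_2(\N)} \le \frac{1}{\sigma_r(X)}\,\norm{\hat X - X}_{\ell_2(\N)\to\ell_2(\N)},
\]
together with its analogue for the row spaces. The key observation is that $(I-\hat P_1)\hat X = 0$, so that for $u\in\mathcal U_1$ one may choose $v\in\mathcal U_2$ with $Xv = u$ and $\norm{v}\le\norm{u}/\sigma_r(X)$ (because $X$ maps its row space onto its column space with smallest singular value $\sigma_r(X)$), and then $(I-\hat P_1)u = (I-\hat P_1)Xv = (I-\hat P_1)(X-\hat X)v$. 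Taking norms and the supremum over unit vectors $u\in\mathcal U_1$ gives the claim, and the row-space analogue follows by passing to adjoints.

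From this I obtain $\norm{\hat P_i - P_i}_{\ell_2(\N)\to\ell_2(\N)}\le \sigma_r(X)^{-1}\norm{\hat X - X}_{\ell_2(\N)\to\ell_2(\N)}$ for $i=1,2$. Here I would use that, since $X$ and $\hat X$ both have rank exactly $r$, the subspaces $\mathcal U_i$ and $\hat{\mathcal U}_i$ have equal finite dimension, so that the standard identity for the gap between equal-dimensional subspaces, $\norm{\hat P_i - P_i}_{\ell_2(\N)\to\ell_2(\N)} = \norm{(I-\hat P_i)P_i}_{\ell_2(\N)\to\ell_2(\N)}$, applies. This is where I expect the only real subtlety to lie, namely in justifying that the reversed term $(I-P_i)\hat P_i$ --- which would otherwise introduce $\sigma_r(\hat X)$ --- does not enlarge the bound. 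For~\eqref{eq:curvature} I then write $P_{\hat X}Z - P_X Z = (\hat P_1 - P_1)Z(I-\hat P_2) + (I-P_1)Z(\hat P_2 - P_2)$, which is verified by expanding the cross term $\hat P_1 Z\hat P_2 - P_1 Z P_2$, and estimate each summand by the product inequality, using $\norm{I-\hat P_2}_{\ell_2(\N)\to\ell_2(\N)}\le 1$ and $\norm{I-P_1}_{\ell_2(\N)\to\ell_2(\N)}\le1$; the two projection bounds then produce the factor $2/\sigma_r(X)$.

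For~\eqref{eq:curvature2} I would use $(I-P_{\hat X})(X-\hat X) = (I-\hat P_1)(X-\hat X)(I-\hat P_2)$, and then cancel $\hat X$ on both sides via $(I-\hat P_1)\hat X = 0 = \hat X(I-\hat P_2)$ and $P_1 X = X$ to rewrite it as $\bigl[(I-\hat P_1)P_1\bigr]\,\bigl[X(I-\hat P_2)\bigr]$. The first factor is controlled in spectral norm by the central estimate above, while $X(I-\hat P_2) = (X-\hat X)(I-\hat P_2)$ has Hilbert--Schmidt norm at most $\norm{X-\hat X}_{\ell_2(\N^2)}$; the product inequality together with $\norm{X-\hat X}_{\ell_2(\N)\to\ell_2(\N)}\le\norm{X-\hat X}_{\ell_2(\N^2)}$ then yields $\sigma_r(X)^{-1}\norm{X-\hat X}_{\ell_2(\N^2)}^2$. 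Notably this second estimate needs only the one-sided bound on $(I-\hat P_1)P_1$ and not the equal-rank identity, so the obstacle flagged above is confined to~\eqref{eq:curvature}.
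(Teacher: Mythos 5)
Your proposal is correct and follows essentially the same route as the paper: the paper likewise reduces everything to the one-sided bound $\|\hat Q_1^\bot Q_1\| \le \sigma_r(X)^{-1}\|\hat X - X\|$ via the annihilation $\hat Q_1^\bot \hat X = 0$ and inversion of $X$ on its row space, and then uses a mixed decomposition of $P_{\hat X}-P_X$ together with the equal-rank identity $\|\hat Q_1 - Q_1\| = \|\hat Q_1^\bot Q_1\|$. The only difference is presentational: where you invoke that identity as a standard fact about equal-dimensional subspaces, the paper proves it explicitly by the CS-decomposition-type argument of Golub--Van Loan, which is indeed the subtlety you correctly flagged.
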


\newcommand{\QU}{Q_1}
\newcommand{\QhU}{\hat Q_1}
\newcommand{\QV}{Q_2}
\newcommand{\QhV}{\hat Q_2}
\newcommand{\QUC}{Q_1^\bot}
\newcommand{\QhUC}{\hat Q_1^\bot}
\newcommand{\QVC}{\hat Q_2^\bot}
\newcommand{\QhVC}{\hat Q_2^\bot}

\newcommand{\nsp}{{\ell_2(\N)\to\ell_2(\N)}}
\newcommand{\nf}{{\ell_2(\N^2)}}

\begin{proof}
 We have the singular value decompositions $X = U_1^{} S U_2^*$ and $\hat X = \hat U_1^{} \hat S \hat U_2^*$ with infinite matrices $U_i, \hat U_i\in \ell_2(\N)\otimes \R^r$, $i = 1,2$, each of which has orthonormal columns and thus represents a partial isometry, and invertible diagonal matrices $S, \hat S \in \R^{r\times r}$. In this proof, we write $\ell_2^r$ for $\R^r$ with the $\ell_2$-norm and abbreviate $\norm{\cdot} = \norm{\cdot}_\nsp$. For $i=1,2$, we define orthogonal projections on $\ell_2(\N)$ by $Q_i = U_i^{} U_i^*$, $Q_i^\bot = I - Q_i$, as well as $\hat Q_i = \hat U_i^{} \hat U_i^*$, $\hat Q_i^\bot = I - \hat Q_i$. We first show that
 \begin{equation}\label{projdiff}
   \max\bigl\{ \norm{ \QU - \QhU},  \norm{ \QV - \QhV}  \bigr\}  \leq \frac{\norm{X - \hat X}}{\sigma_r(X)} .
 \end{equation}
 It suffices to consider $\QU - \QhU$, with the same estimate for $\QV-\QhV$ following analogously.
 Note first that  $\QU - \QhU = (\QhU + \QhUC) (\QU - \QhU)( \QU + \QUC ) = \QhUC \QU^{} - \QhU^{} \QUC$, which by orthogonality implies
 \[
    \norm{  \QU - \QhU } = \max \bigl \{ \norm{\QhUC \QU^{}} , \norm{\QhU^{} \QUC} \bigr\}.
 \]
 By a similar argument as in the matrix case~\cite[Thm.~2.5.1]{Golub13}, we now show that both norms in the above maximum are in fact equal. For any $x \in \R^r$ with $\norm{x}_{\ell_2^r} = 1$,
 \[
   \norm{\QhU U_1 x}_{\ell_2(\N)}^2 + \norm{\QhUC U_1^{} x}_{\ell_2(\N)}^2 = 1 =  \norm{\QU \hat U_1 x}_{\ell_2(\N)}^2 + \norm{\QUC \hat U_1^{} x}_{\ell_2(\N)}^2 ,
 \]
 and thus
 \[
 \begin{aligned}
   \norm{ \QhUC \QU^{} }^2 &= \max_{\norm{x}_{\ell_2^r}=1} \norm{ \QhUC U_1^{} x}_{\ell_2(\N)}^2 = 1 -   \min_{\norm{x}_{\ell_2^r}=1} \norm{ \QhU U_1^{} x}_{\ell_2(\N)}^2  \\
     & = 1 - \sigma_r(\hat U_1^* U_1^{})^2 = 1 -   \min_{\norm{x}_{\ell_2^r}=1} \norm{ \QU \hat U_1 x}_{\ell_2(\N)}^2   = \norm{ \QUC \QhU^{}}^2 = \norm{  \QhU^{} \QUC }^2.
  \end{aligned}
 \]
 Using that $U_1^{} U_1^* = X U_2^{} S^{-1} U_1^*$ since $U_2^* U_2^{} = I$, we therefore obtain
 \[
 \begin{aligned}
   \norm{ \QU - \QhU } &= \norm{ \QhUC \QU^{}} = \norm{ \QhUC X U_2^{} S^{-1} U_1^*} 
     = \norm{ \QhUC ( \hat X - X) U_2^{} S^{-1} U_1^*}  \\
     & \leq \norm{ \QhUC} \norm{ \hat X - X } \norm{U_2}_{\ell_2^r \to \ell_2(\N)} \norm{S^{-1}}_{\ell_2^r\to\ell_2^r} \norm{ U_1^* }_{\ell_2(\N)\to \ell_2^r} \\
       &= \frac{ \norm{ \hat X - X }}{\sigma_r(X)},
   \end{aligned}
 \]
 and hence~\eqref{projdiff}.
 
 To show~\eqref{eq:curvature}, we observe that
 \[
 \begin{aligned}
   P_{\hat X}(Z) - P_X (Z) &= \QhU Z + Z \QhV - \QhU Z \QhV - \QU Z - Z \QV + \QU Z \QV  \\
    &= ( \QhU^{} - \QU^{} ) Z \QVC + \QhUC Z (\QhV^{} - \QV^{}),
    \end{aligned}
 \]
 and with~\eqref{projdiff},
 \[
 \begin{aligned}
   \norm{  (P_{\hat X} - P_X^{}) (Z) }_\nf &\leq \norm{\QhU - \QU } \norm{Z}_\nf \norm{\QVC} + \norm{\QhUC} \norm{Z}_\nf \norm{\QhV - \QV} \\
    &\leq \frac{2 \norm{ \hat X - X} }{\sigma_r(X)} \norm{Z}_\nf.
    \end{aligned}
 \]
 For~\eqref{eq:curvature2}, we similarly rewrite
 \[
 \begin{aligned}
   (I - P_{\hat X} ) (X) &= (P_X^{} - P_{\hat X})(X) = ( \QU^{} - \QhU^{}) X \QhVC + \QUC X ( \QV^{} - \QhV^{}) \\
     & = ( \QU^{} - \QhU^{}) X \QhVC = ( \QU^{} - \QhU^{}) (\hat X - X )\QhVC,
  \end{aligned}
 \]
 where we have used $\QUC X = 0$ and $\hat X \QhVC = 0$. This gives
 \[
   \norm{ (I - P_{\hat X} ) (X)  }_\nf \leq \norm{ \QU-\QhU } \norm{ \hat X - X}_\nf \norm{\QhV} \leq \frac{\norm{\hat X - X}}{\sigma_r(X)} \norm{\hat X - X}_\nf
 \]
 and thus \eqref{eq:curvature2}.
\end{proof}

Applying Lemma \ref{thm:curvature} to sequence representations with respect to a tensor product orthonormal basis of $L_2(\Omega)$, $\Omega = (0,1)^2$, immediately gives the following result that was used in the main part of the paper.

\begin{corollary}\label{cor: curvature in H norm}
Let $u,v \in \Mc_r \subset H =  L_2(0,1)\otimes L_2(0,1)$ and $w \in H$, and let $\rho$ be a lower bound on the smallest singular value of $u$ in $H$. Then
\[
   \|(P_u - P_v)w\|_{H} \le \frac{\|u - v\|_{H} }{\rho}\| w \|_{H}
\]
and
\[
   \|(I - P_v)(u-v)\|_{H} \le \frac{\|u - v\|^2_{H}}{\rho}.
\]
\end{corollary}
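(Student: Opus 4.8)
The plan is to obtain the corollary from Lemma~\ref{thm:curvature} by transporting every object through the isometric isomorphism between $H = L_2(0,1)\otimes L_2(0,1)$ and $\ell_2(\N^2)$ that is already in place in Appendix~\ref{app: local manifold structure}. First I would fix tensor product orthonormal bases and let $\iota = \iota_1 \otimes \iota_2 : H \to \ell_2(\N^2)$ be the induced coefficient map, where $\iota_1,\iota_2 : L_2(0,1)\to\ell_2(\N)$ are the one-dimensional coordinate isometries. Then $\iota$ is a Hilbert space isometry, so $\|\cdot\|_H = \|\iota(\cdot)\|_{\ell_2(\N^2)}$; moreover $\iota$ sends elementary tensors to elementary tensors and hence maps $\Mr$ bijectively onto $\Mc = \{X : \rank(X) = r\}$, since rank is intrinsic and unaffected by the choice of orthonormal bases. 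Under $\iota$ the SVD~\eqref{eq: SVD} of $u$ passes to the SVD~\eqref{eq: SVD in ell_2} of $X \coloneqq \iota(u)$, so singular values are preserved, $\sigma_k(u) = \sigma_k(X)$; in particular a lower bound $\rho \le \sigma_r(u)$ is also a lower bound for $\sigma_r(X)$.

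Next I would record that $\iota$ intertwines the tangential data. Because the tangent space~\eqref{eq: tangent space} and its $\ell_2(\N^2)$ analogue are given by the same formula in the singular vectors, $\iota$ carries $T_u\Mr$ onto $T_X\Mc$, and therefore the $H$-orthogonal projectors satisfy $\iota\,P_u = P_{\iota(u)}\,\iota$. Setting $X = \iota(u)$, $\hat X = \iota(v)$ and $Z = \iota(w)$, applying $\iota$ to $(P_u - P_v)w$ and to $(I - P_v)(u - v)$ thus produces exactly the left-hand sides of~\eqref{eq:curvature} and~\eqref{eq:curvature2}, with $u$ in the role of the base point $X$. Since $\iota$ is isometric, the $H$-norms on the left equal the $\ell_2(\N^2)$-norms appearing in the lemma, so the two estimates of the corollary are precisely the images of~\eqref{eq:curvature} and~\eqref{eq:curvature2} after replacing $\sigma_r(X)$ by the smaller quantity $\rho$.

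The only remaining step is to match the norms on the right-hand sides. Estimate~\eqref{eq:curvature2} is already stated entirely in the Hilbert--Schmidt norm $\|\cdot\|_{\ell_2(\N^2)}$, so it transfers verbatim and yields the second inequality. Estimate~\eqref{eq:curvature} instead carries the spectral norm $\|\hat X - X\|_{\ell_2(\N)\to\ell_2(\N)}$, and here I would use the elementary comparison $\|\cdot\|_{\ell_2(\N)\to\ell_2(\N)} \le \|\cdot\|_{\ell_2(\N^2)}$ between the spectral and Hilbert--Schmidt norms to bound it by $\|v - u\|_H$, which gives the first inequality with the constant inherited from~\eqref{eq:curvature}.

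I do not expect any genuine obstacle in the estimates themselves; the only thing that needs to be set up carefully is the bookkeeping of the identification $\iota$, namely that it simultaneously carries $\Mr$, its singular value decomposition, its tangent spaces, and its projectors to the corresponding objects in $\ell_2(\N^2)$. All of this is contained in the correspondence $\Mc \leftrightarrow \Mr$ already established in Appendix~\ref{app: local manifold structure}, so once this intertwining is invoked the corollary follows by direct substitution.
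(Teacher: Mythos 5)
Your proposal is correct and follows essentially the same route as the paper: the paper's entire proof of Corollary~\ref{cor: curvature in H norm} is the single sentence that applying Lemma~\ref{thm:curvature} to sequence representations with respect to a tensor product orthonormal basis of $L_2(\Omega)$ ``immediately gives'' the result, and your write-up merely makes explicit the bookkeeping that the paper leaves implicit (the isometry $\iota$ carrying $\Mr$, its SVD, singular values, tangent spaces and projectors to their $\ell_2(\N^2)$ counterparts, together with the comparison $\|\cdot\|_{\ell_2(\N)\to\ell_2(\N)}\le\|\cdot\|_{\ell_2(\N^2)}$).

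One caveat: your statement that the first inequality comes ``with the constant inherited from~\eqref{eq:curvature}'' glosses over the fact that \eqref{eq:curvature} carries the constant $2$, whereas the corollary as stated has constant $1$ (i.e.\ $\|u-v\|_H/\rho$ rather than $2\|u-v\|_H/\rho$). Your argument therefore establishes the first bound only with an extra factor $2$. This is not a defect of your route relative to the paper: the paper's own one-line derivation yields exactly the same factor $2$, and the main-text estimate \eqref{eq:curv}, which the corollary is meant to justify, does retain that factor; the sharper constant in Corollary~\ref{cor: curvature in H norm} appears to be an inconsistency in the paper rather than something either proof actually establishes. The second inequality transfers verbatim as you say, since \eqref{eq:curvature2} is stated entirely in the Hilbert--Schmidt norm and the base point ($X\leftrightarrow u$, $\hat X\leftrightarrow v$) matches the roles of $u$ and $v$ in the corollary.
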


We further provide a proof for the estimate~\eqref{eq: H1 projection bound}. 
\begin{proposition}\label{cor: bound on projection in V norm}
For $u \in \Mc_r \cap V$ with $V = H^1_0(\Omega)$, $\Omega = (0,1)^2$, 
we have
\begin{equation*}
\| P_u \|_{V\to V} \le \left(1 + \frac{r}{\sigma_r(u)^2} \| u \|_V \right)^{\frac{1}{2}}.
\end{equation*}
\end{proposition}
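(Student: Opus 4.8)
The plan is to compute the two partial derivatives $\partial_1 P_u v$ and $\partial_2 P_u v$ explicitly from the closed form $P_u = P_1\otimes I + I\otimes P_2 - P_1\otimes P_2$ in~\eqref{eq: tangent space projector}, exploiting that $\partial_i$ commutes with a projection acting on the \emph{other} variable but not with one acting on the \emph{same} variable. Since $u \in \Mc_r \cap V$, all singular vectors $u_k^1,u_k^2$ lie in $H_0^1(0,1)$, so these derivatives make sense. Writing $(P_1\otimes I)v = \sum_{k=1}^r u_k^1\otimes a_k$ and $(I\otimes P_2)v = \sum_{k=1}^r b_k \otimes u_k^2$ with $a_k(x_2) = (u_k^1, v(\cdot,x_2))_{L_2(0,1)}$ and $b_k(x_1) = (u_k^2, v(x_1,\cdot))_{L_2(0,1)}$, a direct calculation gives
\[
\partial_1 P_u v = \sum_{k=1}^r \partial_1 u_k^1 \otimes (I-P_2)a_k + (I\otimes P_2)\partial_1 v ,
\]
and symmetrically $\partial_2 P_u v = \sum_{k=1}^r (I-P_1)b_k\otimes \partial_2 u_k^2 + (P_1\otimes I)\partial_2 v$. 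The non-commuting contribution is precisely the sum involving the derivatives $\partial_1 u_k^1$, $\partial_2 u_k^2$ of the singular vectors, and this is where the factor $\sigma_r(u)^{-1}$ will enter.

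The key structural observation I would use next is orthogonality. In $\partial_1 P_u v$ the first summand lies in $L_2(0,1)\otimes \spn\{u_\ell^2\}^\perp$, while the second lies in $L_2(0,1)\otimes\spn\{u_\ell^2\}$; these subspaces of $L_2(\Omega)$ are orthogonal, so Pythagoras yields
\[
\|\partial_1 P_u v\|_{L_2(\Omega)}^2 = \Big\|\sum_{k=1}^r \partial_1 u_k^1 \otimes (I-P_2)a_k\Big\|_{L_2(\Omega)}^2 + \|(I\otimes P_2)\partial_1 v\|_{L_2(\Omega)}^2 ,
\]
and likewise for $\partial_2 P_u v$. Using the $V = H_0^1(\Omega)$ norm $\|v\|_V^2 = \|\partial_1 v\|_{L_2(\Omega)}^2 + \|\partial_2 v\|_{L_2(\Omega)}^2$, the projection terms are controlled by $\|(I\otimes P_2)\partial_1 v\|\le \|\partial_1 v\|$ and $\|(P_1\otimes I)\partial_2 v\|\le \|\partial_2 v\|$, and together account for the leading $\|v\|_V^2$.

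It then remains to bound the two error terms. For the first I would apply Cauchy--Schwarz in the form $\|\sum_k \partial_1 u_k^1\otimes(I-P_2)a_k\| \le (\sum_k \|\partial_1 u_k^1\|^2)^{1/2}(\sum_k\|a_k\|^2)^{1/2}$, together with the \emph{refined} version of~\eqref{eq:bound on factor H1 norm} noted there (using only $\partial_1 u$), namely $\|\partial_1 u_k^1\|_{L_2(0,1)}\le \sigma_k(u)^{-1}\|\partial_1 u\|_{L_2(\Omega)}$, and the identity $\sum_k \|a_k\|^2 = \|(P_1\otimes I)v\|_{L_2(\Omega)}^2 \le \|v\|_{L_2(\Omega)}^2$. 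Since $\sigma_k\ge\sigma_r$ for $k\le r$, this gives the bound $\sigma_r(u)^{-1}\sqrt r\,\|\partial_1 u\|_{L_2(\Omega)}\,\|v\|_{L_2(\Omega)}$, and symmetrically $\sigma_r(u)^{-1}\sqrt r\,\|\partial_2 u\|_{L_2(\Omega)}\,\|v\|_{L_2(\Omega)}$ for the second. Summing the squares produces exactly $\tfrac{r}{\sigma_r(u)^2}\|u\|_V^2\,\|v\|_{L_2(\Omega)}^2$, and the Poincaré inequality $\|v\|_{L_2(\Omega)}\le\|v\|_V$ on $(0,1)^2$ then yields $\|P_u v\|_V^2 \le \bigl(1 + \tfrac{r}{\sigma_r(u)^2}\|u\|_V^2\bigr)\|v\|_V^2$, which is~\eqref{eq: H1 projection bound}.

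The main obstacle is that $P_u$ does not commute with the differential operators, so one cannot simply invoke $\|P_u\|_{H\to H}\le 1$. The crux is to isolate the non-commuting part as the explicit singular-vector-derivative terms above and to recognize the orthogonal splitting that keeps the constant sharp; I expect the bookkeeping of which projection commutes with which derivative, and the use of the refined factor bound (only the matching partial derivative of $u$) to avoid a spurious factor of $2$, to be the most delicate points. I also note that the printed statement carries $\|u\|_V$ where the computation—and~\eqref{eq: H1 projection bound}—give $\|u\|_V^2$.
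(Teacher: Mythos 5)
Your proof is correct and follows essentially the same route as the paper: the identical orthogonal re-splitting $P_u = I\otimes P_2 + P_1\otimes(I-P_2)$, Pythagoras across the ranges $\spn\{u_\ell^2\}$ and $\spn\{u_\ell^2\}^\perp$, Cauchy--Schwarz combined with the factor bound \eqref{eq:bound on factor H1 norm}, and a Poincar\'e inequality; the only (immaterial) difference is that you attach the refined directional bound to the singular vectors of $u$ and apply Poincar\'e to $v$, whereas the paper bounds $\|P_1\|_{H_0^1(0,1)\to H_0^1(0,1)}$ with the full $\|u\|_V$ and uses Poincar\'e on the interval, both yielding the same constant. You are also right that the $\|u\|_V$ in the printed statement is a typo for $\|u\|_V^2$, as confirmed by \eqref{eq: H1 projection bound} and the concluding estimate of the paper's own proof.
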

\begin{proof}
Let $\phi \in H_0^1(0,1)$. For the $L_2$-orthogonal projection $P_1$ on the span of the left singular vectors $u_1^1,\dots,u^1_r$, the estimate~\eqref{eq:bound on factor H1 norm} yields
\begin{equation}\label{eq: bound on projection}
\begin{aligned}
\| (\partial_1 \circ P_1)\phi) \|_{L_2(0,1)}^2 &= \left\| \sum_{k=1}^r \langle u_k^1, \phi \rangle_{L_2(0,1)} \partial_1 u_k^1 \right\|^2_{L_2(0,1)} \\ &\le \left( \sum_{k=1}^r \abs{\langle \phi, u_k^1 \rangle}^2 \right) \left( \sum_{k=1}^r \| \partial_1 u_k^1 \|_{L_2(0,1)}^2 \right) \\ &\le r \frac{1}{\sigma_r(u)^2} \| u \|_V \| \phi \|_{L_2(0,1)}^2.
\end{aligned}
\end{equation}
Since $\| \phi \|_{L_2(0,1)} \le \| \phi \|_{H_0^1(0,1)}$ by the Poincar\'e inequality, this shows
\[
\| P_1 \|_{H_0^1(0,1) \to H_0^1(0,1)} \le \frac{\sqrt{r}}{\sigma_r(u)} \| u \|_{V}.
\]
Using~\eqref{eq: tangent space projector} we can write
\(
P_u = I \otimes P_2 + P_1 \otimes (I - P_2),
\)
which due to~\eqref{eq: bound on projection} gives
\begin{align*}
\| P_u v \|_{H_0^1(0,1) \otimes L_2(0,1)}^2 &= \| (I \otimes P_2)v \|_{H_0^1(0,1) \otimes L_2(0,1)}^2 +  \| (P_1 \otimes (I-P_2))v \|_{H_0^1(0,1) \otimes L_2(0,1)}^2 \\
&\le \| v \|_{H_0^1(0,1) \otimes L_2(0,1)}^2 + \frac{r}{\sigma_r(u)^2} \| u \|_V^2 \| v \|_{H_0^1(0,1) \otimes L_2(0,1)}^2
\end{align*}
for any $v \in V$, where we use that the operator norm of a tensor product operator equals the product of operator norms; see,~e.g.,~\cite[Prop.~4.150]{Hackbusch19}. The norm $\| P_u v \|_{L_2(0,1) \otimes H_0^1(0,1)}^2$ can be estimated in the same way, so in summary we have
\begin{equation*}
\| P_u v \|_{V}^2 \le \left(1 + \frac{r}{\sigma_r(u)^2} \| u \|_V^2 \right) \| v \|_V^2,
\end{equation*}
as asserted.
\end{proof}
In the proof we have used the Poincar\'e inequality $\| \phi \|_{L_2(0,1)} \le \| \phi \|_{H_0^1(0,1)}$ on the interval $(0,1)$. When a general domain $\Omega = (a_1,b_1) \times (a_2,b_2)$ is considered, one can obtain a similar estimate $\| P_u \|_{V\to V}^2 \le 1 + \bar {c}^2 \frac{r}{\sigma_r(u)^2} \| u \|_V^2$ where $\bar c$ is the maximum of the Poincar\'e constants of both intervals.

\bibliographystyle{plain}
\bibliography{BEKUparabolic}

\end{document}